\def\P{{\mathbb{P}}}
\def\PD{\check{\mathbb P}}
\def\Z{{\mathbb{Z}}}
\def\R{{\mathbb{R}}}
\def\A{{\mathcal{A}}}
\def\B{{\mathcal{B}}}
\newtheorem{theorem}{Theorem}
\newtheorem{prop}{Proposition}
\newtheorem{cor}{Corollary}
\newtheorem{lemma}{Lemma}
\theoremstyle{definition}
\newtheorem{conj}{Conjecture}
\newtheorem{define}{Definition}
\newtheorem{rem}{Remark}
\title{Logarithmic bundles of deformed Weyl arrangements of type $A_2$}
\author{Takuro Abe}
\email{abe.takuro.4c@kyoto-u.ac.jp}
\urladdr{\url{https://www.math.kyoto-u.ac.jp/~abetaku/}}
\address{Department of Mechanical Engineering and Science,
Kyoto University,
Sakyo-ku, Kyoto, 606-8501,
Japan}
\author{Daniele Faenzi}
\email{daniele.faenzi@univ-pau.fr}
\urladdr{{\url{http://univ-pau.fr/~faenzi/}}}
\author{Jean Vall\`es}
\email{jean.valles@univ-pau.fr}
\address{Universit\'e de Pau et des Pays de l'Adour,
  Avenue de l'Universit\'e - BP 576 - 64012 PAU Cedex - France}
\urladdr{\url{http://web.univ-pau.fr/~jvalles/jean.html}}
\keywords{Line arrangements, Logarithmic sheaves, Weyl arrangements,
  Root systems}
\subjclass[2010]{52C35, 14F05, 32S22}
\thanks{T. A. is supported by JSPS Grants-in-Aid for Young Scientists (B) No. 24740012. D. F. and J. V. partially supported by GEOLMI
  ANR-11-BS03-0011. All authors supported by Sakura Campus France project {\it G\'eom\'etrie, combinatoire et topologie des
arrangements d'hyperplans}}
\begin{document}

\maketitle

\begin{abstract}
We consider  deformations of the Weyl arrangement of 
type $A_2$, which include 
the extended Shi and Catalan arrangements. These last ones are
well-known to be free. We study their sheaves of logarithmic vector
fields in all other cases, and show that they are Steiner bundles. Also, we determine 
explicitly their unstable lines. As a corollary, some counter-examples
to the shift isomorphism problem are given.
\end{abstract}

\section*{Introduction}
Let $\Phi$ be an irreducible crystallographic root system in Euclidean
space $V \simeq \R^m$, let $\Phi^+ \subset \Phi$ be the positive roots, and let $\eta$ be the Coxeter number
of $\Phi$.
Let $x_1,\ldots,x_m$ be coordinates of $V$, set
$S=\R[x_0,\ldots,x_m]$, and denote by $\mathrm{Der}(S)$ the free $S$-module of
derivations of $S$,  generated by the partial derivatives
$\frac{\partial}{\partial x_0}, \ldots,\frac{\partial}{\partial x_m}$.
For $s\in \Z$ and $\alpha \in \Phi^+$, define the hyperplanes:
$$H_{\alpha,s}=\{x \in \P^{m} \mid \alpha(x_1,\ldots,x_m)=s x_0\}
\subset \P^m.$$
Fix integers 
$k,j \ge 0$, and define the (cone over the) \textit{deformation of the Weyl arrangement of type 
$\Phi$}:
$$
\A_\Phi^{[-j,k+j]}= \{x_0=0\} \cup \{H_{\alpha,s} \mid \alpha \in \Phi^+,
-j \le s \le k+j\}.
$$

The combinatorics, topology and algebra of $\A = \A_{\Phi}^{[-j,k+j]}$
have been studied by several authors, for
instance by Postnikov and Stanley in \cite{PS},  
by Athanasiadis in \cite{Ath0}, by Edelman and Reiner in \cite{ER}, and 
by Yoshinaga in \cite{Y0}, especially when $k \in \{0,1\}$. In particular, 
the freeness of  $\A$ when $k=0, 1$ was conjectured by Edelman and Reiner and proved in \cite{Y0} by Yoshinaga. 
By {\it freeness} here we mean freeness of logarithmic derivation module
of $\A$:
$$
D_0(\A):=\{\theta \in \mathrm{Der}(S) \mid
\theta(f_{j,k})=0\},
$$
where $f_{j,k}$ is the form of degree $n = |\A|$ given as product of linear forms defining the hyperplanes of
$\A$.
Equivalently, freeness means splitting of the
the sheafification $T_{\A}$ of $D_0(\A)$. This is a
reflexive sheaf of rank $m$ called  \textit{logarithmic sheaf}. 
It can also be defined as the kernel of the
 Jacobian map:
$$\begin{CD}\mathcal{O}_{\P^m}^{m+1} @>\nabla(f_{j,k})>> \mathcal{O}_{\P^m}(n-1).\end{CD}$$

In spite of the good knowledge of $T_\A$ for $k\in \{0,1\}$, almost
nothing is known about $T_\A$ for $k \ge 2$, not even for $A_2$.
For example, setting $\B = \A_\Phi^{[-j-1,k+j+1]}$, the \textit{shift
  isomorphism problem}, cf. \cite[Remark 3.7]{Y} asks
whether there is an isomorphism:
\begin{eqnarray}
T_{\A} &\simeq& T_{\B}(\eta). \label{eq2}
\end{eqnarray}
Another question is the \textit{shifted dual isomorphism problem}, to
the effect that:
\begin{eqnarray} \label{dualiso}
T_{\A} &\simeq& T_{\A}^\vee(-\eta(k+2j+1)). \label{eq3}
\end{eqnarray}

These  isomorphisms hold when $k=0,1$ by \cite{Y0}. However, even equality of 
characteristic polynomials (i.e., of Chern
classes) of these sheaves is unknown in general: this is  the so-called ``functional
equation'' conjecture of \cite{PS}, cf. also \cite[Conjecture 3.4 and 3.5]{Y}.
However the roots of the characteristic polynomial should have real
part $\eta(k+2j+1)/2$ by the ``Riemann hypothesis'' of \cite{PS},
verified for $\Phi$ of type $A,B,C,D$ in \cite{Ath1}.

\medskip

In this paper, we are most interested in
 the case $\Phi=A_2$.  We switch to
the notation $(z,x,y)$ rather than $(x_0,x_1,x_2)$, and we fix
  $\A=\A_{A_2}^{[-j,k+j]}$. We have $\eta=3$.
In this case $T_\A$ is locally free  (a vector bundle) of rank $2$, and
the lines of $\A$ are defined by vanishing of the form:
$$f_{j,k}=z\prod_{-j \le s \le k+j}(x-s z)(y-s z)(y+x-s z).$$
Concerning resolutions, our
main theorem is the following.
\begin{theorem} \label{main1}
For any $k \ge 2$ and $j \ge 0$, there is a resolution:
$$
0 \to \mathcal{O}_{\P^2}(-1)^{k-1 }
\to \mathcal{O}_{\P^2}^{k+1}
\to T_{\A}(2k+1+3j) \to 0.
$$
In particular, $T_{\A}(2k+1+3j)$ is a Steiner bundle.
\label{main2}
\end{theorem}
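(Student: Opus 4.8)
\emph{Strategy.} I would show that the twist $E:=T_{\A}(2k+1+3j)$ has precisely the cohomology that the Beilinson monad assembles into a Steiner resolution. Write $\ell=2k+1+3j$; since $|\A|=3(k+2j+1)+1$ the equality $c_1(E)=-(|\A|-1)+2\ell=k-1$ is automatic. It then suffices to prove: (i) the Chern class identity $c_2(E)=\binom{k}{2}$ (equivalently $\chi(E(-1))=0$, and then $\chi(E)=k+1$); (ii) $H^0(E(-1))=0$; and (iii) $H^1(E)=0$. Indeed, since $k\ge 2$, (ii) gives $H^0(E(-t))=0$ for all $t\ge 1$, hence $H^2(E)=H^2(E(-1))=H^2(E(-2))=0$ and $H^0(E(-2))=0$ by Serre duality (using $E^{\vee}\cong E(1-k)$ for a rank-$2$ bundle); together with $\chi(E(-1))=0$ this forces $H^1(E(-1))=0$, and with (iii) the only surviving cohomology of $E$ is $H^0(E)$ (of dimension $k+1$) and $H^1(E(-2))$ (of dimension $k-1$). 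The Beilinson spectral sequence then collapses to the two-term complex $\mathcal{O}_{\P^2}(-1)^{k-1}\to\mathcal{O}_{\P^2}^{k+1}$ with cohomology $E$; as $E$ is a rank-$2$ vector bundle this complex is automatically a short exact sequence, which is exactly the desired resolution.

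\emph{Step (i).} This is bookkeeping. The Jacobian sequence $0\to T_{\A}\to\mathcal{O}_{\P^2}^{3}\to\mathcal{I}_Z(|\A|-1)\to 0$ gives $c_2(T_{\A})=(|\A|-1)^2-\operatorname{len}(Z)$, where $Z$ is the singular scheme of $\A$ and $\operatorname{len}(Z)=\sum_p (m_p-1)^2$, the sum running over the singular points $p$ of $\A$ with $m_p$ the number of lines of $\A$ through $p$. These points are the three pencil centres $[0:0:1]$, $[0:1:0]$, $[0:1:-1]$, each lying on its own pencil and on $\{z=0\}$ and hence of multiplicity $k+2j+2$, together with the grid points $p_{s,t}=[1:s:t]$ with $-j\le s,t\le k+j$, which are triple if $s+t\in[-j,k+j]$ and double otherwise. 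Substituting these multiplicities into the formula and simplifying yields $c_2(E)=\binom{k}{2}$.

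\emph{Step (ii): the main point.} We must show $H^0(T_{\A}(\ell-1))=D_0(\A)_{2k+3j}=0$, i.e.\ that $\A$ admits no logarithmic derivation of polynomial degree $2k+3j$ (so that the smallest exponent of $D_0(\A)$ equals $\ell$). Because $T_{\A}$ is not known a priori to be semistable, no slope estimate is available and one must exploit the three-pencil geometry directly. A convenient reformulation: on the chart $\{z\ne 0\}$, with $u=x/z$, $v=y/z$, the arrangement becomes $\{p(u)p(v)p(u+v)=0\}$ where $p(t)=\prod_{-j\le s\le k+j}(t-s)$, and computing residues along the $k+2j+1$ lines of each pencil identifies $D(\A^{\mathrm{aff}})$ with the syzygy module $\operatorname{Syz}\bigl(p(u),p(v),p(u+v)\bigr)\subset\mathbb{K}[u,v]^3$; the filtration by degree of the latter recovers the graded pieces of $D_0(\A)$. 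The desired vanishing then becomes a statement about this syzygy module, approachable via the Hilbert--Burch structure of the ideal $\bigl(p(u),p(v),p(u+v)\bigr)$, whose zero locus is exactly the set of triple points. An alternative is to anchor an induction at $k\le 1$, where $\A$ is free --- $T_{\A}=\mathcal{O}(-3j-1)\oplus\mathcal{O}(-3j-2)$ for $k=0$ and $T_{\A}=\mathcal{O}(-3j-3)^{\oplus 2}$ for $k=1$ --- and propagate the bound by adding the three lines $x=(k+j)z$, $y=(k+j)z$, $x+y=(k+j)z$ one at a time through the deletion--restriction sequences $0\to T_{\A}\to T_{\A\setminus H}\to (i_H)_{*}\mathcal{O}_H(\ast)\to 0$, carefully tracking the (highly non-generic) incidences of each new line with the existing singular points and centres.

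\emph{Step (iii).} In the same spirit, $H^1(T_{\A}(\ell))$ is the degree-$(|\A|-1+\ell)=5k+9j+4$ graded piece of $J^{\mathrm{sat}}/J$, where $J$ is the Jacobian ideal of $f_{j,k}$; it vanishes once $J$ is saturated in that degree, which follows from a Castelnuovo--Mumford regularity bound for $Z$ --- transparent from the pencil structure, since each $p_{s,t}$ lies on the explicit lines $x-sz=0$, $y-tz=0$ (and $x+y-(s+t)z=0$ at the triple points), and deleting $\{z=0\}$ reduces $\A$ to the three affine pencils. Granting (i)--(iii), Beilinson's monad yields the stated resolution, so $E=T_{\A}(2k+1+3j)$ is a Steiner bundle. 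The decisive obstacle is Step (ii): producing a sharp lower bound for the degree in which a logarithmic derivation first appears, with no semistability to fall back on, forces one into the fine arithmetic of the deformed $A_2$ root system and the integer interval $[-j,k+j]$.
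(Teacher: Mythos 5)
Your reduction of the theorem to cohomology vanishing is sound: for $E=T_{\A}(2k+3j+1)$ of rank $2$ with $c_1(E)=k-1$, the Beilinson spectral sequence does collapse to the desired two-term complex once one knows $H^0(E(-1))=0$, $H^1(E)=0$ and the Chern class identity, and your bookkeeping with Serre duality (via $E^{\vee}\simeq E(1-k)$) and Riemann--Roch to kill the remaining groups is correct. But the proposal stops exactly where the real work begins. Step (ii), the vanishing $H^0(T_{\A}(2k+3j))=0$, is only described as ``approachable via the Hilbert--Burch structure'' of the ideal $(p(u),p(v),p(u+v))$, or alternatively by an induction through deletion--restriction whose incidences you promise to track ``carefully'' --- and you yourself flag this step as ``the decisive obstacle.'' Neither route is carried out, and neither is routine: the syzygy module of $(p(u),p(v),p(u+v))$ for $p$ a product of $k+2j+1$ consecutive integer translates is precisely the fine arithmetic the theorem encodes, and the naive induction fails because adding lines shrinks $T_{\A}$ (so $H^0(T_{\A'}(d))\subseteq H^0(T_{\A}(d))$ goes the wrong way once you must raise the twist by $2$ while adding three lines). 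Step (iii) is likewise asserted to ``follow from a Castelnuovo--Mumford regularity bound'' that is never established. So the proposal is a correct reduction, not a proof.

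For comparison, the paper avoids computing cohomology of the final bundle altogether. It builds $\A$ one line at a time from the free grid arrangement $\A_0$ (where $T_{\A_0}\simeq\mathcal{O}_{\P^2}(-k-2j-1)^2$ is known), and at each step computes the single integer $t_{\A_i,H_{i+1}}$ (resp.\ $t_{\B_i,H_{1-i}}$) of triple points on the new line. Because these counts are constant ($k+2j+1$ for the inner diagonals, $2k+3j+1$ for the outer ones), the deletion--restriction sequence of Proposition~\ref{FV} becomes, after twisting and dualizing, exactly the ``extension'' and ``reduction'' operations of \S\ref{steiner-reduction-extension}, each of which is known to preserve the Steiner property by \cite[Proposition 2.1]{V}. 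The cohomological vanishing you need is thereby obtained for free along the induction rather than attacked head-on. If you want to salvage your approach, the most realistic path is to prove (ii) and (iii) by the same one-line-at-a-time induction --- at which point you will essentially have reconstructed the paper's argument.
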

By \textit{Steiner bundle} here we mean a vector bundle whose
resolution is given by a matrix of linear forms.
This agrees and gives a new interpretation of the following formulas,
easily obtained for instance counting multiple points and using
\cite[Remark 2.2]{FV}:
$$
\mbox{$
c_1(T_{\A}(2k+3j+1))=k-1, \qquad c_2(T_{\A}(2k+3j+1))=\frac{k(k-1)}{2}$}.
$$ 
Since $T_\A(2k+1+3j)$ is a Steiner bundle, for any
line $L \subset \P^2$, by restriction onto $L$ we get a surjective map:
$$\begin{CD}
     \mathcal{O}_L^{k+1} @>>> T_{\A}(2k+1+3j)|_L \end{CD}.$$ 
This implies that $T_{\A}(2k+1+3j)|_L = \mathcal{O}_L(a)\oplus  \mathcal{O}_L(k-1-a)$ with $0\le a\le k-1$.
When $a=0$ or $a=k-1$ the number $|k-1-2a|$ is as large as possible.
This justifies the next definition, cf. \cite[Page 508]{V} or \cite[Definition 2.1]{FMV}.
\begin{define}
 Let $k\ge 2$  and $E$ be a Steiner bundle defined by:
$$\begin{CD}
  0@>>>  \mathcal{O}_{\P^2}(-1)^{k-1} @>>> \mathcal{O}_{\P^2}^{k+1}@>>> E @>>>0.
  \end{CD}
$$
A line $L$ such that $E|_L=\mathcal{O}_L\oplus \mathcal{O}_L(k-1)$ or
equivalently $H^0(\P^2,E^{\vee}|_L)\neq 0$ or equivalently
$H^1(L,E|_L(-2))\neq 0$ is called \textit{unstable}. The set of such
lines is denoted by $W(E)$, it is naturally a subscheme of $\PD^2$.
 These unstable lines were first called \textit{superjumping lines} in \cite{DK}.
\end{define}

Our next result, tightly related with Theorem \ref{main2}, deals with the set of  unstable lines of 
$T_{\A}$, which we can  determine explicitly. The figure shows
them in case $j=0$ and $k=3$ or $k=4$, the thick orange lines being
unstable (the solid ones lie in the arrangement, the dashed ones don't).

\newcommand{\boundellipse}[3]
{(#1) ellipse (#2 and #3)
}

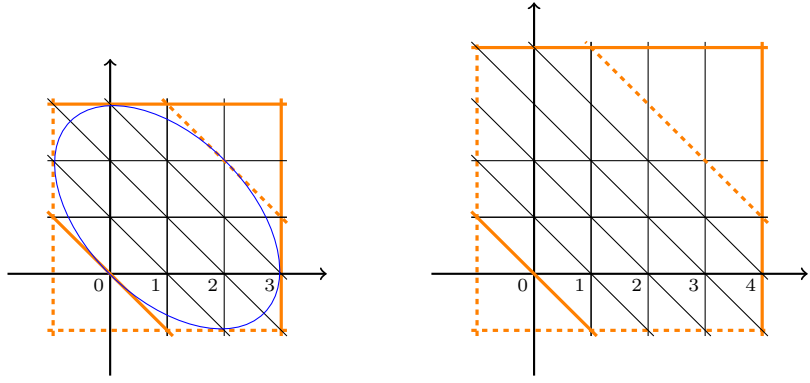
\begin{figure}[h!]
  \begin{subfigure}[b]{0.5\textwidth}
    \centering
  \begin{tikzpicture}[scale=0.75]
    \draw[color = orange,very thick,dash pattern=on 2pt off 2pt] (-1,-1.1) -- (-1,3.1);
    \draw[thick,->] (0,-1.8) -- (0,3.8);
    \draw (-0.2,-0.2) node {\tiny{$0$}};
    \draw (0.8,-0.2) node {\tiny{$1$}};
    \draw (1.8,-0.2) node {\tiny{$2$}};
    \draw (2.8,-0.2) node {\tiny{$3$}};
    \draw (1,-1.1) -- (1,3.1);
    \draw (1,-1.1) -- (1,3.1);
    \draw (2,-1.1) -- (2,3.1);
    \draw[color = orange,very thick] (3,-1.1) -- (3,3.1);
    \draw[color = orange,very thick,dash pattern=on 2pt off 2pt] (-1.1,-1) -- (3.1,-1);
    \draw (-1.1,1) -- (3.1,1);
    \draw[thick,->] (-1.8,0) -- (3.8,0);
    \draw (-1.1,1) -- (3.1,1);
    \draw (-1.1,2) -- (3.1,2);
    \draw[color = orange,very thick] (-1.1,3) -- (3.1,3);
    \draw[color = orange,very thick] (1.1,-1.1) -- (-1.1,1.1);
    \draw (2.1,-1.1) -- (-1.1,2.1);
    \draw (3.1,-1.1) -- (-1.1,3.1);
    \draw (3.1,-0.1) -- (-0.1,3.1);
    \draw[color = orange,very thick,dash pattern=on 2pt off 2pt] (3.1,0 .9) -- (0 .9,3.1);
\draw[rotate around = {45:(1,1)},color=blue] \boundellipse{1,1}{1.4241}{2.4};
  \end{tikzpicture}
  \subcaption*{Conic of unstable lines for $j=0$, $k=3$.}   
  \end{subfigure}
\hspace{0.0cm}
  \begin{subfigure}[b]{0.4\textwidth}
    \centering
  \begin{tikzpicture}[scale=0.75]
    \draw[color = orange,very thick,dash pattern=on 2pt off 2pt] (-1,-1.1) -- (-1,4.1);
    \draw[thick,->] (0,-1.8) -- (0,4.8);
    \draw (-0.2,-0.2) node {\tiny{$0$}};
    \draw (0.8,-0.2) node {\tiny{$1$}};
    \draw (1.8,-0.2) node {\tiny{$2$}};
    \draw (2.8,-0.2) node {\tiny{$3$}};
    \draw (3.8,-0.2) node {\tiny{$4$}};
    \draw (1,-1.1) -- (1,4.1);
    \draw (1,-1.1) -- (1,4.1);
    \draw (2,-1.1) -- (2,4.1);
    \draw (3,-1.1) -- (3,4.1);
    \draw[color = orange,very thick] (4,-1.1) -- (4,4.1);
    \draw[color = orange,very thick,dash pattern=on 2pt off 2pt] (-1.1,-1) -- (4.1,-1);
    \draw (-1.1,1) -- (4.1,1);
    \draw[thick,->] (-1.8,0) -- (4.8,0);
    \draw (-1.1,1) -- (4.1,1);
    \draw (-1.1,2) -- (4.1,2);
    \draw[color = orange,very thick] (-1.1,4) -- (4.1,4);
    \draw[color = orange,very thick] (1.1,-1.1) -- (-1.1,1.1);
    \draw (2.1,-1.1) -- (-1.1,2.1);
    \draw (4.1,-1.1) -- (-1.1,4.1);
    \draw (3.1,-1.1) -- (-1.1,3.1);
    \draw (4.1,-0.1) -- (-0.1,4.1);
    \draw[color = orange,very thick,dash pattern=on 2pt off 2pt] (4.1,0 .9) -- (0 .9,4.1);
  \end{tikzpicture}
  \subcaption*{Unstable lines for $j=0$, $k=4$.}   
  \end{subfigure}
\end{figure}

\begin{theorem}
Assume that $k \ge 3$. Then the following six lines
are unstable for the vector bundle $T_{\A}$:
\[
\begin{array}{lllll}
 \mbox{lines of $\A$:} && x=(k+j)z, & y=(k+j)z, & y+x=-j z,\\
\mbox{lines not of $\A$:} && x=-(j+1)z, & y=-(j+1)z, & y+x=(k+j+1)z.
\end{array}
\]
If $k\ge 4$, then 
there are no other 
unstable lines for $T_{\A}$. If $k=3$ these lines are tangent to the smooth conic
\[
C_j := \{3 j^{2} z^{2}+12 j z^{2}-4 x^{2}-4 x y-4 y^{2}+12
      x z+12 y z=0\},
\]
 and a line is unstable for $T_{\A}$ if and only it is tangent to $C_j$.
\label{main3}
\end{theorem}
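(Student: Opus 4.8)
The plan is to prove Theorem~\ref{main3} by combining the explicit resolution of $E := T_{\A}(2k+1+3j)$ from Theorem~\ref{main2} with a direct analysis of the restriction of that resolution to a line. Since the six asserted unstable lines are symmetric under the $\mathfrak{S}_3$-action permuting $\{x=0,y=0,x+y=0\}$ (which preserves $\A$ up to coordinate change), it suffices to handle one representative from each $\mathfrak{S}_3$-orbit, namely $\{x=(k+j)z\}$ among the lines of $\A$ and $\{x=-(j+1)z\}$ among the lines not of $\A$.

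**Step 1: identify the first map of the resolution.** The hard part is to make Theorem~\ref{main2} effective: one needs the actual $(k+1)\times(k-1)$ matrix $M$ of linear forms whose cokernel is $E$. I expect this to emerge from the proof of Theorem~\ref{main2}, where the generators of $\mathcal{O}_{\P^2}^{k+1}$ are naturally indexed by something like the parameters $s \in \{-j,\dots,k+j\}$ (or a Vandermonde-type basis in $z,x$ and in $z,y$), so that $M$ is built from two Vandermonde blocks in the variables $x/z$ and $y/z$ glued along a common row. Once $M$ is written down, a line $L = \{a x + b y + c z = 0\}$ is unstable exactly when $E|_L = \mathcal{O}_L \oplus \mathcal{O}_L(k-1)$, equivalently when the induced map $\mathcal{O}_L(-1)^{k-1} \xrightarrow{M|_L} \mathcal{O}_L^{k+1}$ drops rank, i.e. when $M|_L$ (now a matrix of linear forms in two variables) has a nonzero constant vector in its image, equivalently $H^0(E^\vee|_L)\neq 0$ as in the definition. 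So the whole problem reduces to: \emph{for which lines $L$ does $M|_L$ fail to be a matrix of linearly independent generic linear forms?}

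**Step 2: test the six lines and prove there are no others when $k\ge 4$.** For $L=\{x=(k+j)z\}$, substituting $x = (k+j)z$ collapses one Vandermonde block to a single column, so $M|_L$ visibly drops rank; similarly $\{x=-(j+1)z\}$ is the ``other end'' of the range of $s$-values, and the same collapse happens after the shift. This confirms the six lines are unstable. For the converse when $k\ge 4$, I would use the structure of Steiner bundles on $\P^2$: the scheme $W(E)\subset\PD^2$ of unstable lines of a Steiner bundle with these Chern classes is cut out by the maximal minors of $M^{\mathrm{t}}$ restricted appropriately, and by \cite{V,FMV} (or \cite{DK}) it is either finite of length $\le \binom{k}{2}$-ish, or a curve of small degree; the point is that a line of $\A$ can be unstable only at one of the two ``extreme'' parallels in each of the three directions, and a generic line meets all $k+1$ Vandermonde-type nodes transversally, giving full rank. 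A clean way is to compute $\dim H^0(E^\vee|_L)$ directly from $M|_L$: off the six lines it vanishes, by a rank count on the two Vandermonde blocks (each of size related to $k$) which degenerate only at the listed values of the parameter. The main obstacle is organizing this rank count cleanly enough to cover all $k\ge 4$ uniformly rather than case by case.

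**Step 3: the special case $k=3$.** Here $E$ has rank $2$ with $c_1=2$, $c_2=3$, so $E(-1)$ has $c_1=0$, $c_2=1$: a semistable rank-$2$ bundle with these invariants on $\P^2$ whose jumping locus (here: unstable locus) is classically a smooth conic in $\PD^2$, and the bundle is recovered as a twist of the restriction of the tangent bundle / a Schwarzenberger-type bundle associated to that conic, cf. \cite{V}. Concretely $M$ is now a $4\times 2$ matrix of linear forms, and the unstable lines are those $L$ for which the two columns of $M|_L$ become proportional; eliminating the line coordinates from the $2\times 2$ minors yields a single conic equation in $(a:b:c)\in\PD^2$, which I would check equals $C_j$ by a direct (symbolic) computation, verifying en route that the six lines of Step~2 are tangent to $C_j$ and that $C_j$ is smooth (its discriminant, a polynomial in $j$, is visibly nonzero). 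Thus for $k=3$ the unstable locus is exactly the dual conic $C_j^\vee$, i.e. a line is unstable iff it is tangent to $C_j$. I expect Step~1 (extracting the explicit matrix) and the uniform rank argument in Step~2 to be where essentially all the work lies; Step~3 is then a finite computation.
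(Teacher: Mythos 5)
Your plan hinges on an explicit $(k+1)\times(k-1)$ matrix of linear forms presenting $E=T_\A(2k+3j+1)$, which you only \emph{conjecture} to have a Vandermonde-block structure; neither your proposal nor the paper's proof of Theorem~\ref{main2} actually produces such a matrix (the resolution there is built inductively by extensions and reductions, and the matrix is never written down). Without it, Step~2 --- in particular the converse for $k\ge 4$, which is the real content of the theorem --- reduces to a ``rank count'' that you yourself acknowledge is not organized; as it stands nothing rules out unstable lines other than the six listed ones. The paper sidesteps all of this: the six lines are shown unstable by the soft triple-point count of Theorem~\ref{main0} (via Proposition~\ref{jump} and Corollary~\ref{unstable1}), no matrix needed, and the converse is obtained by tracking the unstable locus through the same inductive chain used for Theorem~\ref{main2}, using the inclusions $W(F_{i+1})\setminus\{H_{i+1}\}\subset W(F_i)\setminus\{H_i\}$ from \cite[Proposition 2.1]{V}, anchored at the completely understood small Steiner bundles $F_4$ (unstable locus $=$ tangents to a conic) and $F_5$ (exactly six unstable lines), and symmetrically at $E_{k+j-3}$, $E_{k+j-4}$ for the outer construction; the two resulting sandwiches pin down $W(T_\A(2k+3j+1))$ exactly. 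That is the idea your proposal is missing, and it is the essential one.

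Two smaller points. Your $\mathfrak{S}_3$-symmetry reduction is usable but unjustified as stated: a naive permutation of $\{x=0,y=0,x+y=0\}$ does not preserve $\A_{A_2}^{[-j,k+j]}$; you need the affine substitution $(x,y,z)\mapsto(x,\,-x-y+kz,\,z)$ together with $x\leftrightarrow y$, which does preserve $\A$ and matches the two orbits of three lines in the statement. And in Step~3 the normalization is off: for $k=3$ one has $c_1(E(-1))=0$ but $c_2(E(-1))=2$, not $1$; the conclusion that the unstable locus is the set of tangents to a smooth conic (the $F_4$ case of \S\ref{behave}, after \cite{DK}) is nevertheless correct, and identifying that conic with $C_j$ is indeed a finite computation.
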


\begin{figure}[h!]
  \begin{subfigure}[b]{0.7\textwidth}
    \centering
  \begin{tikzpicture}[scale=0.6]
    \draw (-1,-3.1) -- (-1,5.1);
    \draw[thick,->] (0,-3.8) -- (0,5.8);
    \draw (-0.2,-0.2) node {\tiny{$0$}};
    \draw (0.8,-0.2) node {\tiny{$1$}};
    \draw (1.8,-0.2) node {\tiny{$2$}};
    \draw (2.8,-0.2) node {\tiny{$3$}};
    \draw (3.8,-0.2) node {\tiny{$4$}};
    \draw (4.8,-0.2) node {\tiny{$5$}};
    \draw (1,-3.1) -- (1,5.1);
    \draw (4,-3.1) -- (4,5.1);
    \draw (5,-3.1) -- (5,5.1);
    \draw (-2,-3.1) -- (-2,5.1);
    \draw (-3,-3.1) -- (-3,5.1);
    \draw (1,-3.1) -- (1,5.1);
    \draw (2,-3.1) -- (2,5.1);
    \draw (3,-3.1) -- (3,5.1);
    \draw (-3.1,-1) -- (5.1,-1);
    \draw (-3.1,1) -- (5.1,1);
    \draw[thick,->] (-3.8,0) -- (5.8,0);
    \draw (-3.1,1) -- (5.1,1);
    \draw (-3.1,2) -- (5.1,2);
    \draw (-3.1,3) -- (5.1,3);
    \draw (-3.1,4) -- (5.1,4);
    \draw (-3.1,5) -- (5.1,5);
    \draw (-3.1,-2) -- (5.1,-2);
    \draw (-3.1,-3) -- (5.1,-3);
    \draw (3.1-2,-3.1) -- (-3.1,3.1-2);
    \draw (3.1-1,-3.1) -- (-3.1,3.1-1);
    \draw (3.1,-3.1) -- (-3.1,3.1);
    \draw (2.1+2,-1.1-2) -- (-1.1-2,2.1+2);
    \draw (3.1+2,-1.1-2) -- (-1.1-2,3.1+2);
    \draw (3.1+2,-0.1-2) -- (-0.1-2,3.1+2);
    \draw (3.1+2,0 .9-2) -- (0 .9-2,3.1+2);
    \draw (3.1+2,0.9-1) -- (0.9-1,3.1+2);
    \draw (3.1+2,0.9) -- (0.9,3.1+2);
\draw[rotate around = {45:(1,1)},color=blue]\boundellipse{1,1}{1.424}{2.4};
\draw[rotate around = {45:(1,1)},color=blue] \boundellipse{1,1}{1.5*1.4241}{3.6};
\draw[rotate around = {45:(1,1)},color=blue] \boundellipse{1,1}{1.98*1.4241}{4.85};
  \end{tikzpicture}
  \subcaption*{Conics $C_j$ of unstable lines for $k=3$, $j=0,1,2$.}   
  \end{subfigure}
\end{figure}
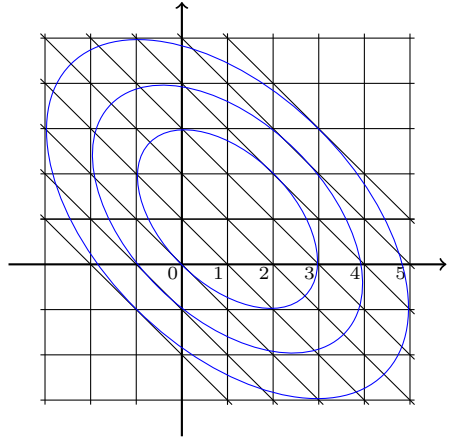

\begin{rem}
 When $k=2$, for any $j$
we will see that  $T_{\A}\simeq
T_{\P^2}(-6-3j)$. Hence all lines are unstable in this case, cf. \S \ref{behave}.
\end{rem}

As a corollary of Theorem \ref{main2} and \ref{main3}, we prove
that the shift isomorphism does not hold even for $A_2$ when $k\ge 3$. 

\begin{cor} \label{non}
When the root system $\Phi$ is of the type $A_2$ the shift isomorphism (\ref{eq2}) holds if and only if 
$k=0,1$ or $2$ i.e. it does not hold for $k \ge 3$.
\end{cor}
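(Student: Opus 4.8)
The plan is to prove the two implications separately, extracting the negative direction from Theorems \ref{main2} and \ref{main3} via the invariance of the unstable locus under twist, and the positive direction from the explicit description of $T_{\A}$ for $k\le 2$.

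For the \emph{if} direction, recall that when $k=0$ or $k=1$ the shift isomorphism $(\ref{eq2})$ holds by \cite{Y0}, as recalled in the introduction, so only the case $k=2$ remains. Here I would use the Remark preceding this corollary, which gives $T_{\A}\simeq T_{\P^2}(-6-3j)$ for $\A=\A_{A_2}^{[-j,2+j]}$; applying it to $\B=\A_{A_2}^{[-(j+1),2+(j+1)]}$ yields $T_{\B}\simeq T_{\P^2}(-6-3(j+1))=T_{\P^2}(-9-3j)$, so that $T_{\B}(3)\simeq T_{\P^2}(-6-3j)\simeq T_{\A}$ (here $\eta=3$). Hence $(\ref{eq2})$ holds for $k=0,1,2$.

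For the \emph{only if} direction, fix $k\ge 3$ and $j\ge 0$, write $\A=\A_{A_2}^{[-j,k+j]}$ and $\B=\A_{A_2}^{[-(j+1),k+(j+1)]}$, and suppose for contradiction that $T_{\A}\simeq T_{\B}(3)$. The crucial remark is that $\B$ is again a type $A_2$ deformation with the same $k$ and with $j$ replaced by $j+1$, so Theorems \ref{main2} and \ref{main3} apply to $\B$ after this substitution. By Theorem \ref{main2} the normalizations $T_{\A}(2k+1+3j)$ and $T_{\B}(2k+1+3(j+1))$ are Steiner bundles as in the definition of unstable lines above, and the assumed isomorphism twists into an isomorphism between them; since the set of unstable lines of a Steiner bundle is an isomorphism invariant, this forces $W(T_{\A})=W(T_{\B})$. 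I would then contradict this equality using Theorem \ref{main3}: if $k\ge 4$, the line $x=(k+j)z$ is unstable for $T_{\A}$, while the unstable lines of $T_{\B}$ of the form $x=cz$ are exactly $x=(k+j+1)z$ and $x=-(j+2)z$, and $k+j\notin\{k+j+1,-(j+2)\}$ for $k\ge 3$, $j\ge 0$; if $k=3$, $W(T_{\A})$ consists of the lines tangent to the smooth conic $C_j$ and $W(T_{\B})$ of those tangent to $C_{j+1}$, and in the equation of $C_j$ only the coefficient $3j^2+12j$ of $z^2$ varies with $j$, with $3j^2+12j\ne 3(j+1)^2+12(j+1)$, so that $C_j\ne C_{j+1}$ as conics and, since the tangent lines of a smooth conic determine it by biduality, $W(T_{\A})\ne W(T_{\B})$. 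Either way we reach a contradiction, so $(\ref{eq2})$ fails for all $k\ge 3$.

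In this argument the substantive content is entirely in Theorems \ref{main2} and \ref{main3}; granted those, the corollary is a bookkeeping comparison of the data attached to the parameters $j$ and $j+1$. The only points I would want to treat with some care are the observation that an isomorphism of logarithmic bundles up to a twist induces an isomorphism of the associated normalized Steiner bundles, hence preserves the unstable locus, and the separate check of the case $k=2$, where $T_{\A}$ degenerates to a twist of $T_{\P^2}$. Beyond these I do not expect a genuine obstacle.
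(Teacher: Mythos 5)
Your proof is correct and follows essentially the same route as the paper: the failure for $k\ge 3$ is deduced from Theorem \ref{main3} by observing that the unstable locus is an isomorphism invariant of the (suitably twisted) logarithmic bundle and that it visibly changes when $j$ is replaced by $j+1$. The only small divergence is at $k=2$, where the paper cites \cite{A} while you derive the isomorphism directly from $T_{\A}\simeq T_{\P^2}(-6-3j)$; both are fine, and your version is self-contained given the Remark.
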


Let us go back to more general irreducible crystallographic root systems.
Let $\Phi$ be such a system, set $\A=\A_{\Phi}^{[-j,k+j]}$ and call {\it graded Betti
  numbers} of $T_\A$ the integers $\beta_{i,j}$ appearing in a minimal graded free resolution:
\[
0 \to \oplus_i S(-i-d)^{\beta_{d,i}} \to \cdots \to \oplus_i S(-i)^{\beta_{0,i}} \to D_0(\A) \to 0.
\]
Our results
together with experimental computation with the computer algebra
package Macaulay2 (\cite{M2}) suggest the following.

\begin{conj} \label{all} Let $k \ge 1$, $j \ge 0$,  $\A =
  \A_\Phi^{[-j,k+j]}$ and $\B = \A_\Phi^{[-j-1,k+j+1]}$.
\begin{enumerate}[i)]
  \item \label{1}  The graded Betti numbers of  $T_{\A}$ and
    $T_\B(\eta)$ are the same.
  \item \label{2}   The graded Betti numbers of  $T_{\A}^\vee(-\eta(k
    + 2j +1))$ and $T_{\A}$ are the same.
  \item \label{3}  The projective dimension of $T_{\A}$ is $\min(m-1,k-1)$.
   \item \label{4} The sheaf $T_{\A}$ has a linear resolution if 
     $\Phi=A_m$.
  \end{enumerate}
\end{conj}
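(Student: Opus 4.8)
The plan is to first settle the case $\Phi=A_2$, where the conjecture is a direct consequence of the results above, and then to outline an inductive strategy for arbitrary irreducible $\Phi$ and point to the step where I expect it to resist.

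\textbf{The case $\Phi=A_2$.} Here $m=2$. If $k=1$ the arrangement $\A$ is free by \cite{Y0}, so $D_0(\A)$ is free: $\pd T_{\A}=0=\min(m-1,k-1)$, the resolution is trivially linear, and i), ii) follow from the explicit exponents, which for $\A$ and $\B$ agree after the twist by $\eta=3$. If $k\ge 2$, Theorem \ref{main1} gives the length-one linear resolution $0\to\mathcal{O}_{\P^2}(-1)^{k-1}\to\mathcal{O}_{\P^2}^{k+1}\to T_{\A}(2k+1+3j)\to 0$; thus $\pd T_{\A}=1=\min(1,k-1)$ (part iii)) and the resolution is linear (part iv)). For i): applying Theorem \ref{main1} to $\B=\A_{A_2}^{[-j-1,k+j+1]}$ and using $2k+1+3(j+1)=(2k+1+3j)+\eta$ shows that $T_{\B}(\eta)(2k+1+3j)$ carries exactly the same resolution as $T_{\A}(2k+1+3j)$, hence $T_{\A}$ and $T_{\B}(\eta)$ have identical graded Betti numbers --- consistently with, and indeed motivated by, the fact from Corollary \ref{non} that for $k\ge 3$ these two bundles are nevertheless not isomorphic. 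For ii): a rank-two bundle $E$ on a smooth surface satisfies $E^{\vee}\simeq E(-c_1(E))$, and from $c_1(T_{\A}(2k+3j+1))=k-1$ we read $c_1(T_{\A})=-3(k+2j+1)=-\eta(k+2j+1)$, so $T_{\A}^{\vee}(-\eta(k+2j+1))\simeq T_{\A}$ on the nose. The Remark on $k=2$ (where $T_{\A}\simeq T_{\P^2}(-6-3j)$) is the special case $k=2$ of the above via the Euler sequence.

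\textbf{General $\Phi$.} I would argue by induction on $m=\rank\Phi$, with an inner induction on $k$, using the addition--deletion/restriction exact sequences for logarithmic sheaves. Deleting an extreme hyperplane, say $H_{\alpha,k+j}$ for a highest root $\alpha$, produces a short exact sequence tying $T_{\A}$ to $T_{\A'}$ with $\A'=\A\setminus H_{\alpha,k+j}$, in which the relevant connecting map is made of linear forms, while the restricted arrangement $\A^{H_{\alpha,k+j}}$ lives in one dimension less. The key wish is to recognise $\A^{H_{\alpha,k+j}}$ as (a cone over) a deformed Weyl arrangement of a proper sub-root-system of $\Phi$ --- something combinatorially close to the truth in view of the level-counting of \cite{PS,Ath0} --- so that the inductive hypothesis applies and linearity of the resolution propagates for $\Phi=A_m$ (part iv)), pinning down $\pd T_{\A}$ along the way (part iii)). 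Parts i) and ii) would then come from the two visible symmetries of the slab of levels and of $\Phi^+$: the involution $s\mapsto (k+2j)-s$ turns $\A$ into a shift of $\B$, and $-w_0$ should induce the shifted duality. In small ranks one can calibrate the inductive statement against \cite{M2}, as in the experiments mentioned above.

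\textbf{Expected obstacle.} For $k\ge 2$ and $\rank\Phi\ge 3$ the arrangement $\A$ is neither free nor combinatorially understood; already the numerical shadow of part i), namely equality of the characteristic polynomials of $T_{\A}$ and $T_{\B}(\eta)$, is precisely the open ``functional equation'' conjecture of \cite{PS}, so i) cannot be reduced to Chern-class bookkeeping and genuinely requires controlling the whole graded module $D_0(\A)$. Correspondingly, the restriction step is fragile: for a general boundary hyperplane, $\A^{H}$ is only combinatorially --- not as an arrangement --- a deformed Weyl arrangement of a sub-system, so the induction needs a more robust input, plausibly a multirestriction criterion or an extension of Yoshinaga's freeness theorem into the non-free range $k\ge 2$, together with an explicit Steiner-type presentation of $D_0(\A)$ generalising Theorem \ref{main1}. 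This is where I expect the real work to lie, and the reason the statement is offered as a conjecture rather than a theorem.
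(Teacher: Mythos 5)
Your proposal matches the paper: the statement is offered only as a conjecture, and the paper's entire justification is the remark that Theorem \ref{main1} yields parts i) and iv) for $\Phi=A_2$ while parts ii) and iii) hold for any rank-two system via $E^\vee\simeq E(-c_1(E))$ and the length of the Steiner (resp.\ free) resolution --- exactly the computations you carry out, including the check $c_1(T_\A)=-\eta(k+2j+1)$ and the matching twist $2k+1+3(j+1)=(2k+1+3j)+\eta$ for $\B$. Your inductive sketch for general $\Phi$ goes beyond what the paper attempts, and you correctly flag it as speculative and identify the genuine obstruction (the open functional-equation conjecture and the failure of the restriction to remain a deformed Weyl arrangement), which is consistent with the paper leaving the general case as a conjecture supported only by Macaulay2 experiments.
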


Linear resolution here means that $\beta_{j,i}=0$ for all $j$ except for a single
$i=i_0$. The projective dimension is the greatest $j$ with
$\beta_{j,i} \ne 0$ for some $i$.

Theorem \ref{main1} proves parts \eqref{1} and \eqref{4} for
$\Phi=A_2$, while \eqref{2} and \eqref{3} are true for any root system
of rank $2$, so the above conjecture holds for $\Phi = A_2$.

\begin{rem}
  Part \eqref{2} of the above conjecture refines the shifted dual isomorphism
  problem. Indeed, a computer-aided argument shows that the
  isomorphism \eqref{eq3} does not exist in general, even for $\Phi = A_3$, cf. \S\ref{experiments}.
\end{rem}


The paper is organized  as follows. 
In \S \ref{preliminaries} we introduce several results and definitions which will be 
used to prove Theorems \ref{main2} and \ref{main3}. In  Theorem
\ref{main0} we propose a new criterion to determine  the splitting
type  
of $\A^{[-j,k+j]}$ on a line. In \S \ref{resolution} we prove Theorem \ref{main2}. In
\S \ref{unstable} we show Theorem \ref{main3}.
In \S\ref{experiments} we give some
 computer-aided determination of jumping lines and unstable hyperplanes.

\section{Preliminaries}
\label{preliminaries}

In this section let us recall several results on 
arrangements of hyperplanes and the related algebraic geometry.


\subsection{Vector bundles on the projective plane}

In this subsection we review some results and definitions on rank two vector bundles over 
the projective plane. The main reference here is \cite{OSS}. 
We write the Chern classes of a sheaf $E$ as integers $(c_1,c_2)$, meaning $c_1(E)=c_1
H$ and $c_2(E) = c_2 H^2$, $H$ being the class of a line and $H^2$ the class
of a point in $\P^2$.

\subsubsection{Splitting type of bundles}

Let $E$ be a rank two vector bundle on $\P^2$. 
First of all, recall that the 
restriction of any vector bundle $E$ to any projective line $L \subset \P^2$ splits as a direct 
sum of line bundles.

If $E$ has rank $2$, there are two 
integers $a_{1,L}\le a_{2,L}$  such that:
$$E|_L \simeq \mathcal{O}_{L}(a_{1,L})\oplus \mathcal{O}_{L}(a_{2,L}),
\qquad \mbox{with $a_{1,L}+a_{2,L}=c_1(E)$}. $$  
We call 
$(a_{1,L},a_{2,L})$ the \textit{splitting type} of $E$ with respect to $L$. 

By the semi-continuity theorem, these integers
do not change when the line $L$ is chosen a non-empty Zariski open set $U \subset
\PD^2$.
This means that 
there exist two integers $a_1$ and $a_2$ 
such that on any line $L \in U$, the restricted bundle $E|_L=\mathcal{O}_L(a_1)\oplus \mathcal{O}_L(a_2)$. 
Moreover, $|a_1-a_2|=\mbox{min}_{L \in  \PD^2} 
\{|a_{1,L}-a_{2,L}|\}.$ The couple $(a_1,a_2)$ is called the \textit{generic splitting type} of $E$.

\subsubsection{Stability of bundles and jumping lines}

Let $E$ be a vector bundle of rank $2$ on $\P^2$.
We can tensor $E$ by 
$\mathcal{O}_{\P^2}(t)$ in such a way that 
$c_1(E(t)) \in \{-1,0\}$. We call $E(t)$ the normalized twist of $E$. Then $E$ is \textit{stable} if
and only if $H^0(\P^2,E(t))=0$.
If  $c_1(E(t))=0$, 
and $H^0(\P^2,E(t-1))=0$ then $E$ is \textit{semistable} and $E$ is
\textit{strictly semistable} if it is semistable, but not stable.

By the Grauert-M\"{u}lich theorem, 
if $E$ is stable (or semi-stable), then 
the generic splitting type of $E$ verifies $|a_1-a_2|\le 1$. This leads to the notion of jumping lines.

\begin{define}
 A line $L$ such that 
$E|_L=\mathcal{O}_L(a_1-k)\oplus \mathcal{O}_L(a_2+k)$, where 
$k>0$ and $(a_1,a_2)$ with $a_1\le a_2$ is the  generic splitting type
of a 
rank-two semi-stable vector bundle $E$,
 is a \textit{jumping line of order $k$} of $E$. The scheme of jumping lines lives in $\PD^2$ and it is denoted by $S(E)$.
A jumping line of order $k>1$ is a singular point in $S(E)$. 
\end{define}

\subsection{Steiner bundles and unstable lines}
\label{steiner-reduction-extension}
Let $i \ge 2$ and let us consider a Steiner bundle $F_i$:
$$\begin{CD}
  0@>>>  \mathcal{O}_{\P^2}(-1)^{i-2} @>>> \mathcal{O}_{\P^2}^{i}@>>> F_i @>>>0.
  \end{CD}
$$
Then one can construct a new Steiner bundle from $F_i$ in two different ways.
\subsubsection{By reduction} \label{reduction}
 Assume that $i\ge 4$ and that the line $L$ is unstable for $F_i$. Then the kernel $F_{i-1}$ of the map $F_i\twoheadrightarrow \mathcal{O}_L$  is again a Steiner bundle
with resolution:  $$\begin{CD}
  0@>>>  \mathcal{O}_{\P^2}(-1)^{i-3} @>>> \mathcal{O}_{\P^2}^{i-1}@>>> F_{i-1} @>>>0. 
  \end{CD}
$$ The proof is done in \cite[Proposition 2.1]{V}. Moreover in the same
proposition it is proved that $W(F_{i})\setminus \{L\}
\subset W(F_{i-1})$.
\subsubsection{By extension} \label{extension}
Let $H \subset \P^2$ be any line. A non-trivial extension
$$\begin{CD}
  0@>>>   F_{i} @>>> F @>>> \mathcal{O}_{H} @>>>0 
  \end{CD}
$$
is a also a Steiner bundle with resolution:
 $$\begin{CD}
  0@>>>  \mathcal{O}_{\P^2}(-1)^{i-1} @>>> \mathcal{O}_{\P^2}^{i+1}@>>> F @>>>0. 
  \end{CD}
$$
Indeed, the surjection $\mathcal{O}_{\P^2} \twoheadrightarrow
\mathcal{O}_H$ lifts to $\mathcal{O}_{\P^2} \to F$ because
$H^1(\P^2,F_i)=0$.
Combining this with the surjection $\mathcal{O}^i_{\P^2}
\twoheadrightarrow F_i$ we obtain the required epimorphism 
$\mathcal{O}^{i+1}_{\P^2} \twoheadrightarrow F$, whose kernel is an
extension of $\mathcal{O}_{\P^2}^{i-2}(-1)$ by
$\mathcal{O}_{\P^2}(-1)$, and therefore precisely $\mathcal{O}_{\P^2}^{i-1}(-1)$.

Moreover, when $i\ge 3$ the line $H$ is unstable for $F$ and again
according to \cite[Proposition 2.1]{V} we know that
$W(F)\setminus \{H\} \subset W(F_i)$. We
will use these two constructions later on.

\subsection{Behaviour of first Steiner bundles}
\label{behave}

We give here a quick overview of jumping and unstable lines for Steiner for
low $i$.

\begin{itemize}
\item[$i=2$.] In this case we have $F_{2} \simeq \mathcal{O}_{\P^2}^2$.
\item[$i=3$.] It is well-known that $F_{3} \simeq T_{\P^2}(-1)$.
All lines are unstable and none of them is a jumping line.
\item[$i=4$.] Unstable and jumping lines coincide this time,
i.e. $S(F_4)=W(F_4)$.
By \cite{DK}, $W(F_4)$ is a smooth conic in $\PD^2$, and the unstable
lines of $F_4$ are the tangent lines to the dual conic.
\item[$i=5$.] Also this time we have $S(F_5)=W(F_5)$. The
scheme $W(F_5)$ is either finite of length $6$ or consist of a smooth
conic in $\PD^2$, see \cite{DK}.
\item[$i\ge 6$.] 
Unstable lines do not always exist in this range. When they do, they are jumping lines of maximal 
order, namely the splitting type on an unstable line $H$ is $(0,i-2)$. 
The scheme $W(F_5)$ is either finite of length $\le i+1$ or consist of a smooth
conic in $\PD^2$, see \cite{DK,V}.
\end{itemize}

\subsection{Line arrangements and vector bundles}

Let $\A$ be a line arrangement in $\P^2$ and let be $H$ a line of $\A$. 
Define $n=|\A|$ and the restricted arrangement of points $\A^H:=\{K
\cap H \mid K \in \A,\ K \neq H\}$.
Set $h:=|\A^H|$.

Let $t_{\A,H,i}$ be the number of points with multiplicity $i$ on $H$. 
The ``number of triple points'' on $H$ is:
\[
\mbox{$t_{\A,H}=\sum_{i\ge 3}(i-2)t_{\A,H,i}.$}
\]
We recall the following result, 
which is often used in the rest of this article.
\begin{prop}[\cite{FV}, Proposiont 5.1] \label{FV}
There is an 
exact sequence
$$ 0 \to T_{\A} \to T_{\A \setminus \{H\}} \to
\mathcal{O}_H(-t_{\A,H}) \to 0.
$$
\end{prop}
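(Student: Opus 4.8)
The plan is to realize the sequence as the deletion sequence for the logarithmic tangent sheaf. First I would identify $T_{\A}$ with a twist of the classical logarithmic tangent sheaf, namely $T_{\A}\cong T_{\P^2}(-\log\A)(-1)$; this is forced by the Chern classes, since $c_1(T_{\P^2}(-\log\A))=3-n$ yields $c_1(T_{\A})=1-n$, which is exactly the first Chern class of the kernel of $\mathcal{O}_{\P^2}^3\xrightarrow{\nabla f}\mathcal{O}_{\P^2}(n-1)$. Under this identification the tautological inclusion ``a vector field tangent to every line of $\A$ is in particular tangent to every line of $\A\setminus\{H\}$'' produces, after twisting by $\mathcal{O}_{\P^2}(-1)$, an injection $T_{\A}\hookrightarrow T_{\A\setminus\{H\}}$. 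Away from $H$ the two sheaves agree, because the extra tangency condition along $H$ is vacuous at points off $H$; hence the cokernel $\mathcal{Q}:=\coker(T_{\A}\to T_{\A\setminus\{H\}})$ is a coherent sheaf supported on $H$, and it remains to prove $\mathcal{Q}\cong\mathcal{O}_H(-t_{\A,H})$.

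The heart of the argument, and the step I expect to be the main obstacle, is the local analysis of $\mathcal{Q}$ along $H$. Fix $p\in H$ and let $m_p$ be the number of lines of $\A$ through $p$. The germ of $\A$ at $p$ is a central arrangement of $m_p$ lines, and that of $\A\setminus\{H\}$ is obtained by deleting $H$. Since every rank-two central arrangement is free, both germs of logarithmic tangent sheaves are free $\mathcal{O}_{\P^2,p}$-modules of rank two sharing the radial derivation tangent to all lines through $p$. Writing the inclusion in adapted bases, its matrix is upper triangular with a unit on the diagonal, and by Saito's criterion its determinant equals, up to a unit, the ratio of the two defining germs, i.e. the linear form $\alpha_H$ cutting out $H$. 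Thus $\mathcal{Q}_p\cong\mathcal{O}_{\P^2,p}/(\alpha_H)\cong\mathcal{O}_{H,p}$ at every $p\in H$, so $\mathcal{Q}$ is invertible on $H\cong\P^1$ with no embedded or torsion points, whence $\mathcal{Q}\cong\mathcal{O}_H(d)$ for a single integer $d$. The delicate point is precisely that the inclusion drops rank by exactly one along $H$ and never more, which is exactly what freeness of the localized arrangement guarantees even at points of high multiplicity.

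It then remains to compute $d$, which I would do by Riemann--Roch rather than by summing local contributions (these are all of length one and do not detect $d$). Starting from $c_2(T_{\A})=(n-1)^2-\sum_p(m_p-1)^2$, valid because an ordinary $m$-fold point contributes Tjurina number $(m-1)^2$ (cf. \cite[Remark 2.2]{FV}), together with $c_1(T_{\A})=1-n$, Riemann--Roch on $\P^2$ computes $\chi(T_{\A})$ and $\chi(T_{\A\setminus\{H\}})$. In the difference all contributions from points off $H$ cancel, and those on $H$ collapse via $\sum_{p\in\A^H}(m_p-1)=n-1$, since each of the $n-1$ lines other than $H$ meets $H$ exactly once. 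This gives $\chi(\mathcal{Q})=\chi(T_{\A\setminus\{H\}})-\chi(T_{\A})=h-n+2$, hence $d=\chi(\mathcal{Q})-1=h-n+1$. Finally $t_{\A,H}=\sum_{p\in\A^H}(m_p-2)=(n-1)-h$, so $d=-t_{\A,H}$, yielding $\mathcal{Q}\cong\mathcal{O}_H(-t_{\A,H})$ and the asserted short exact sequence.
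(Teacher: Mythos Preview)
The paper does not prove this proposition at all: it is quoted verbatim from \cite[Proposition~5.1]{FV} and used as a black box, so there is no ``paper's own proof'' to compare against. Your proposal is therefore not a reproduction but an independent argument, and it is essentially correct. The local step---freeness of the localized central arrangement at each $p\in H$, hence a $2\times 2$ matrix of determinant $\alpha_H$ up to a unit---is exactly the standard mechanism behind the addition--deletion sequence, and your Riemann--Roch computation of the degree checks out: with $c_1=1-n$, $c_2=(n-1)^2-\sum_p(m_p-1)^2$, and $\sum_{p\in\A^H}(m_p-1)=n-1$, one finds $\chi(T_{\A\setminus\{H\}})-\chi(T_\A)=h-n+2$, whence $d=h-n+1=-t_{\A,H}$.

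The only methodological remark is that your determination of $d$ is somewhat indirect. The usual route (and the one taken in \cite{FV}) identifies the cokernel intrinsically rather than via Euler characteristics: the surjection $T_{\A\setminus\{H\}}\to\mathcal{Q}$ is, up to twist, the residue map sending a logarithmic vector field on $\P^2$ to its restriction as a vector field on $H$ tangent to the multiarrangement $\A^H$, and one reads off the degree directly from the number of points in $\A^H$ (equivalently, from $t_{\A,H}=n-1-h$ as in Lemma~\ref{lemma}). Your Chern-class bookkeeping reaches the same conclusion but presupposes the formula $c_2(T_\A)=(n-1)^2-\sum_p(m_p-1)^2$, which itself is usually derived \emph{from} sequences of this type; so while not circular given the reference you cite, the residue identification is the cleaner and more self-contained path.
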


\begin{lemma} \label{lemma}
  We have $t_{\A,H}=n-1-h$.
\end{lemma}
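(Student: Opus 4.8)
The plan is to prove $t_{\A,H} = n - 1 - h$ by a counting argument on the line $H$ itself. Recall that $n = |\A|$, so there are $n-1$ lines of $\A$ other than $H$. Each such line $K$ meets $H$ in exactly one point (we are in $\P^2$, so parallel lines meet at infinity, which is included since $\A$ contains $\{z=0\}$; in any case in $\P^2$ any two distinct lines meet). So the $n-1$ intersection points, counted with multiplicity, are distributed among the $h = |\A^H|$ distinct points of the restricted arrangement $\A^H$. Thus I would write $n - 1 = \sum_{P \in \A^H} m_P$, where $m_P$ is the number of lines of $\A$ distinct from $H$ passing through $P$, i.e. the multiplicity of $P$ in $\A^H$. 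Grouping by multiplicity value $i \ge 2$ (every point of $\A^H$ lies on at least one line $\ne H$, and if only one then it is a simple point with $i=1$ — wait, one must be careful: a point of $\A^H$ could have multiplicity $1$), this reads $n-1 = \sum_{i \ge 1} i\, t_{\A,H,i}$.

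Next I would combine this with the trivial identity $h = |\A^H| = \sum_{i \ge 1} t_{\A,H,i}$, counting each point of the restricted arrangement once. Subtracting, $n - 1 - h = \sum_{i \ge 1} (i-1) t_{\A,H,i} = \sum_{i \ge 2}(i-1) t_{\A,H,i}$. This is close to but not exactly the definition $t_{\A,H} = \sum_{i \ge 3}(i-2) t_{\A,H,i}$, so the remaining step is to reconcile the two. The discrepancy is $\sum_{i \ge 2}(i-1)t_{\A,H,i} - \sum_{i \ge 3}(i-2)t_{\A,H,i} = t_{\A,H,2} + \sum_{i \ge 3} t_{\A,H,i} = \sum_{i \ge 2} t_{\A,H,i}$, which is the number of points of $\A^H$ of multiplicity at least $2$. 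For the equality $t_{\A,H} = n-1-h$ to hold on the nose, one needs this correction term to vanish, which would force every point of $\A^H$ to be simple, contradicting the very definition of $t_{\A,H}$.

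So the real content, and the step I expect to be the actual (mild) obstacle, is sorting out the precise bookkeeping convention the authors use for $\A^H$: almost certainly $\A^H$ is being regarded not as a plain set of points but as a multi-arrangement, or $h$ counts points with their multiplicities, or — most likely — the intended reading is that $h = |\A^H|$ already incorporates the right count so that $\sum_{i\ge 2}(i-1)t_{\A,H,i}$ telescopes to match. The cleanest route is: parametrize $H \cong \P^1$, note that the $n-1$ lines of $\A\setminus\{H\}$ cut out a divisor of degree $n-1$ on $H$ supported on $\A^H$, hence $n-1 = \sum_{P} m_P$; then observe $h = \#\{P : m_P \ge 1\}$ and that $\sum_P (m_P - 1)_{+}$, after noting every $P \in \A^H$ has $m_P \ge 1$, equals $\sum_{i \ge 2}(i-1)t_{\A,H,i}$; finally invoke whatever normalization (e.g. that in this setting no point of $\A^H$ is simple, or that the "$-2$" versus "$-1$" shift is absorbed by how $\A^H$ or $t_{\A,H}$ is indexed) makes $\sum_{i\ge 2}(i-1)t_{\A,H,i} = \sum_{i\ge 3}(i-2)t_{\A,H,i}$. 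I would present the computation up to $n - 1 - h = \sum_{i\ge 2}(i-1)t_{\A,H,i}$ and then carefully match it to the definition of $t_{\A,H}$, which is a one-line check once the convention is pinned down.
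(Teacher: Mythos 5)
Your double count of the incidences between $H$ and the other $n-1$ lines is the right idea, and it is essentially the same argument the paper uses; but as written your proof does not close, and the reason is that you have adopted the wrong multiplicity convention. In the paper, the multiplicity $i$ of a point of $\A^H$ is the number of lines of $\A$ through it \emph{including $H$ itself} (the standard convention, and the one that makes $\sum_{i\ge 3}(i-2)t_{\A,H,i}$ deserve the name ``number of triple points'': a double point is $H$ meeting exactly one other line, and it contributes $0$). With that convention every point of $\A^H$ has $i\ge 2$, the number of lines of $\A\setminus\{H\}$ through a point of multiplicity $i$ is $i-1$, and your own computation becomes $n-1=\sum_{i\ge 2}(i-1)t_{\A,H,i}$ and $h=\sum_{i\ge 2}t_{\A,H,i}$, whence $n-1-h=\sum_{i\ge 2}(i-2)t_{\A,H,i}=\sum_{i\ge 3}(i-2)t_{\A,H,i}=t_{\A,H}$. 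Your version, with $m_P$ counting only the lines $\ne H$, shifts the index by one, which is exactly the ``$-1$ versus $-2$'' discrepancy you noticed.

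The genuine defect is therefore the final step: you stop at ``invoke whatever normalization makes the two sums equal'' without committing to the (unique, correct) normalization, and along the way you assert that the lemma ``would force every point of $\A^H$ to be simple, contradicting the very definition of $t_{\A,H}$'' --- a false alarm created by your convention, not a feature of the statement. Pin down the convention as above and the proof is complete and coincides with the paper's, which phrases the same count as: $h$ equals $n-1$ minus a loss of $i-2$ for each point of multiplicity $i\ge 3$.
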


\begin{proof}
It is clear that $h=n-1$ when there are only double points on $H$. 
When there is a point of multiplicity $i\ge 3$ on $H$, 
it is necessary to remove 
$i-2$ to $n-1$ to compute $h$. 
More generally, this procedure gives $h=n-1 -\sum_{i\ge 3}(i-2)t_{\A,H,i}$ which means $t_{\A,H}=n-1-h$.
\end{proof}

Proposition \ref{FV} gives the following 
criterion to determine the splitting type.

\begin{theorem} \label{main0}
Let $\A$ be a line arrangement in $\P^2$ with $n:=|\A|$. Let $H \in \A$ and 
$L$ be a line not in $\A$. Define 
$h:=|\A^H|$ and $\ell:=|\{L \cap K \mid K \in \A\}|$. Let $d_1\le d_2$ and $e_1\le e_2$ be  integers  such that:
$$T_{\A}|_H\simeq \mathcal{O}_H(-d_1) \oplus \mathcal{O}_H(-d_2) \qquad \mbox{and} \qquad T_{\A}|_L
\simeq \mathcal{O}_L(-e_1) \oplus \mathcal{O}_L(-e_2).$$
Then we have:
\begin{enumerate}[i)]
\item \label{i} if $ n-h \le \lceil \frac{n-1}{2} \rceil$, then 
$(d_1,d_2)=(n-h,h-1)$,
\item \label{ii} if $ n-\ell \ge \lceil  \frac{n}{2} \rceil$, then 
$(e_1,e_2)=(n-\ell,\ell-1)$.
\end{enumerate}
\end{theorem}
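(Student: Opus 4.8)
The plan is to feed Proposition \ref{FV} into an induction on the number of lines removed, combined with the cohomology of line bundles on $\P^1$. Let me spell out the key mechanism for part \eqref{i}; part \eqref{ii} will be entirely parallel, using $L\notin\A$.

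\medskip

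First I would record what $T_\A|_H$ can be. Since $T_\A$ is a rank-two bundle of first Chern class $c_1(T_\A) = -(n-1)$ (the Jacobian map $\mathcal{O}^{3}\to\mathcal{O}(n-1)$ has a rank-one kernel after twisting, giving $c_1 = -(n-1)$), the restriction to $H$ satisfies $d_1+d_2 = n-1$ with $d_1\le d_2$; in particular $d_1 \le \lceil (n-1)/2\rceil \le d_2$. So to pin down $(d_1,d_2)$ it suffices to show $d_1 = n-h$, i.e. $H^0(H, T_\A|_H(n-h-1))\ne 0$ while $H^0(H, T_\A|_H(n-h-2)) = 0$; equivalently, the minimal twist making $T_\A|_H$ globally generated jumps at $-(n-h)$.

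\medskip

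The heart is the exact sequence of Proposition \ref{FV}:
\[
0 \to T_{\A} \to T_{\A \setminus \{H\}} \to \mathcal{O}_H(-t_{\A,H}) \to 0,
\]
which I restrict to $H$. Using Lemma \ref{lemma}, $t_{\A,H} = n-1-h$, and the normal bundle sequence $0\to \mathcal{O}_H(-1)\to \mathcal{O}_{\P^2}|_H\to\mathcal{O}_H\to 0$ on the middle term, one gets a four-term complex on $H$ relating $T_\A|_H$, $T_{\A\setminus\{H\}}|_H$ and $\mathcal{O}_H(-(n-1-h))$, together with a $\mathrm{Tor}$ contribution $\mathcal{O}_H(-(n-1-h)-1)$ coming from the fact that $\mathcal{O}_H$ is not locally free on $\P^2$. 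The cleaner route, and the one I would actually pursue: twist the sequence by an integer $t$ and take cohomology on $\P^2$. The point is that $T_{\A\setminus\{H\}}$ has $c_1 = -(n-2)$, and since $\A\setminus\{H\}$ still contains the line $\{z=0\}$ meeting every other line, a direct argument (or the known structure of logarithmic bundles containing a generic-enough line) shows $H^0(\P^2, T_{\A\setminus\{H\}}(t)) = 0$ for $t \le \lceil(n-2)/2\rceil - 1$, hence for $t$ in the relevant range $t \le n-h-2$ under the hypothesis $n-h\le\lceil(n-1)/2\rceil$. Then from the long exact sequence
\[
0 \to H^0(T_\A(t)) \to H^0(T_{\A\setminus\{H\}}(t)) \to H^0(\mathcal{O}_H(t - (n-1-h))) \to H^1(T_\A(t)) \to \cdots
\]
one reads: for $t \le n-h-2$ the middle and left term vanish, so $H^0(\mathcal{O}_H(t-(n-1-h))) = 0$ forces... wait — that term is nonzero precisely when $t \ge n-1-h$, which is outside the range, so this is consistent but not yet conclusive. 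To get the jump one must instead show $H^0(T_\A(n-h-1)) \ne 0$: here $t - (n-1-h) = 0$, $H^0(\mathcal{O}_H) = \C$, and one needs the connecting map $H^0(\mathcal{O}_H)\to H^1(T_\A(n-h-1))$ to be zero, equivalently the section of $\mathcal{O}_H$ to lift. That lifting is controlled by $H^1(T_\A(n-h-1))$, and I would close the loop by a dimension/Riemann–Roch count on $\P^2$ forcing a nonzero global section of $T_\A(n-h-1)$, together with the upper bound on the twist ruling out $H^0(T_\A(n-h-2))\ne 0$ by the same cohomology sequence one degree lower.

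\medskip

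\textbf{The main obstacle.} The genuinely delicate point is establishing the vanishing $H^0(\P^2, T_\A(n-h-2)) = 0$ (and the analogue for $T_{\A\setminus\{H\}}$): this is where the hypothesis $n - h \le \lceil\frac{n-1}{2}\rceil$ must be used, and it amounts to a semistability-type statement for $T_\A$ that is not a priori given. I expect the right way through is to iterate Proposition \ref{FV}, peeling off lines until one reaches an arrangement whose logarithmic bundle is understood (ultimately the near-pencil or a small explicit arrangement containing $\{z=0\}$), and track the vanishing of $H^0$ of the normalized twist through each extension, using that at each step the quotient $\mathcal{O}_H(-t_{\A,H})$ contributes only in high degree. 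Once that vanishing is in hand, the splitting type is forced as above, and part \eqref{ii} follows verbatim after replacing the sequence of Proposition \ref{FV} for $\A\cup\{L\}$ (adding the line $L\notin\A$) and noting $n - \ell$ plays the role of $n - h$, with the shift by one in the ceiling coming from $|\A\cup\{L\}| = n+1$.
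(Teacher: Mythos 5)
Your proposal has a genuine gap, and the route you actually commit to cannot work. The decisive problem is that you try to detect the splitting type of $T_\A$ on the \emph{specific} line $H$ through vanishing or non-vanishing of $H^0(\P^2,T_\A(t))$. Global sections on $\P^2$ control the splitting on a line in one direction only: $H^0(\P^2,T_\A(t))=0$ does not imply $H^0(H,T_\A|_H(t))=0$, precisely because $H$ may be a jumping or unstable line --- and detecting such lines is the entire point of this statement and of the rest of the paper. The ``semistability-type statement'' you flag as the main obstacle is indeed not available ($T_\A$ need not be semistable), and even if it were, Grauert--M\"ulich would only control the splitting on a \emph{generic} line, never on the particular $H$ at hand. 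There is also a smaller slip: lifting the section of $\mathcal{O}_H$ in your long exact sequence produces a section of $T_{\A\setminus\{H\}}(n-h-1)$, not of $T_\A(n-h-1)$, so even that step would not yield what you want.

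Ironically, the ``four-term complex on $H$'' that you mention and then abandon as less clean is essentially the right idea. The paper's proof is purely local to the line and needs no global vanishing: for part \eqref{i} one dualizes the sequence of Proposition \ref{FV} and tensors with $\mathcal{O}_H$, which by right-exactness yields a surjection $T_\A^\vee|_H\twoheadrightarrow\mathcal{O}_H(t_{\A,H}+1)=\mathcal{O}_H(n-h)$; a surjection $\mathcal{O}_H(d_1)\oplus\mathcal{O}_H(d_2)\twoheadrightarrow\mathcal{O}_H(m)$ of bundles on $\P^1$ forces $m=d_1$ or $m\ge d_2$, and the hypothesis $n-h\le\lceil\frac{n-1}{2}\rceil\le d_2$ then pins down $\{d_1,d_2\}=\{n-h,h-1\}$ with no cohomology computation at all. (Equivalently, in your language, the Tor term $\mathcal{O}_H(h-n)$ is a line \emph{subbundle} of $T_\A|_H$, which bounds the splitting directly.) For part \eqref{ii} one applies Proposition \ref{FV} to $\A\cup\{L\}$ and restricts to $L$, getting $T_\A|_L\twoheadrightarrow\mathcal{O}_L(\ell-n)$, and argues identically. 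Note that the numerical hypotheses are used only in this elementary step on $\P^1$, not to establish any vanishing on $\P^2$; your plan of ``iterating Proposition \ref{FV} down to a known arrangement'' to obtain such vanishing is both unnecessary and, as written, not carried out.
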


\begin{proof}
The assertion \eqref{i} is known 
(see \cite{WY} for example) but not the assertion \eqref{ii}. Since we can give the same kind of proof to these two statements 
by using   Proposition \ref{FV}, we prove both of them.

For \eqref{i}, by Proposition \ref{FV}, there is an 
exact sequence 
$$\begin{CD}
0 @>>> T_\A
@>>> T_{\A \setminus \{H\}}
@>>> \mathcal{O}_H(-t_{\A,H}) @>>> 0,
\end{CD}
$$
where 
$t_{\A,H}=n-1-h$. Take the dual of this sequence and tensor it by $\mathcal{O}_H$ to obtain a surjection
$$\begin{CD}
T_\A^{\vee}|_H @>>> \mathcal{O}_H(t_{\A,H}+1) @>>> 0.
\end{CD}
$$

Since $T_\A^{\vee}|_H 
\simeq \mathcal{O}_H(d_1) \oplus \mathcal{O}_H(d_2)$, the surjection requires 
$t_{\A,H}+1=n-h= d_1$ or $t_{\A,H}+1=n-h\ge  d_2$. 
If $ n-h \le \lceil \frac{n-1}{2} \rceil$, then $n-h\le
d_2$ and therefore  $n-h=d_1$ or $n-h=d_2$. In both cases  we get $(d_1,d_2)=(n-h,h-1)$.
\medskip

Let us now prove \eqref{ii}.
Consider the exact sequence of Proposition \ref{FV}:
$$\begin{CD}
0 @>>> T_\B
@>>> T_\A
@>>> \mathcal{O}_L(-t_{\B,L}) @>>> 0,
\end{CD}
$$
where $\B:=\A \cup \{L\}$ and $t_{\B,L}=n-\ell$. Restricting this onto $L$, we have 
$$\begin{CD}
T_\A|_L \simeq 
\mathcal{O}_L(-e_1)
\oplus 
\mathcal{O}_L(-e_2) 
@>>> \mathcal{O}_L(-n+\ell) @>>> 0.
\end{CD}
$$

The surjection requires 
$n-\ell\le e_1$ or $n-\ell=  e_2$. 
If $ n-\ell \ge \lceil \frac{n}{2} \rceil$, then
$n-\ell\ge e_1$ and therefore  $n-\ell=e_1$ or $n-\ell=e_2$. In both
cases we obtain $(e_1,e_2)=(n-\ell,\ell-1)$.

%

\section{Resolution of the logarithmic bundle}

\label{resolution}

Here we prove Theorem \ref{main1}.
We construct the arrangement $\A$ in three steps, starting from a grid
of horizontal and vertical lines, and
adding two series of diagonal 
lines, in such a way that the resolution of the logarithmic bundle
remains under control.

\subsection{Starting from the grid}

Let us start by defining the lines:\[
X_i = \{x=iz\}, \qquad Y_i = \{y=iz\}, \qquad H_\infty = \{z=0\}.
\]
The \textit{grid} arrangement consists of the line at infinity and $k+2j+1$
``parallel'' lines $X_i$ and $Y_i$:
$$
\A_0=H_\infty \cup \bigcup_{i=-j}^{k+j} (X_i \cup Y_i) =\{z\prod_{i=-j}^{k+j}(x-i z)(y-i z)=0\}.
$$
It is well-known (or we may 
apply \cite[Proposition 3.3]{FV} to show) that $\A_0$ is free and 
we have $T_{\A_0} \simeq \mathcal{O}_{\P^2}(-k-2j-1)^2$.

\begin{figure}[h!]
  \centering
            \begin{tikzpicture}[scale=0.5]
            \draw (1.8,1.8) node {\tiny{$0$}};
            \draw (2.8,1.8) node {\tiny{$1$}};
            \draw (3.8,1.8) node {\tiny{$2$}}; \draw (4.8,1.8) node {\tiny{$3$}};
            \draw (1.8,2.8) node {\tiny{$1$}};
            \draw (1.8,3.8) node {\tiny{$2$}}; \draw (1.8,4.8) node {\tiny{$3$}};
            \draw (1,-0.1) -- (1,5.5);
            \draw[thick,->] (2,-0.3) to (2,6.8);
            \draw (3,-0.1) -- (3,5.5);
            \draw (4,-0.1) -- (4,5.5); \draw (5,-0.1) -- (5,5.5);
            \draw (5,-0.1) -- (5,5.5);
            \draw (0,-0.1) -- (0,5.5);
            \draw (-0.1,1) -- (5.4,1);
            \draw (-0.1,0) -- (5.4,0);
            \draw[thick,->] (-0.2,2) -- (6.2,2);
            \draw (-0.1,3) -- (5.4,3);
            \draw (-0.1,4) -- (5.4,4); \draw (-0.1,5) -- (5.4,5);
            \draw (-0.1,5) -- (5.4,5);
          \end{tikzpicture}
    \caption*{Grid arrangement $\A_0$ for $j=2$, $k=1$.}
\end{figure}
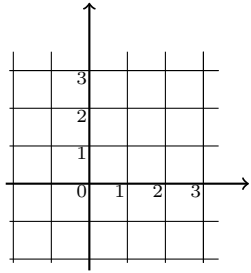

We will now add diagonal lines. 
For $i=-j,\ldots,k+j+1$ we let:
\[
H_i:=\{y+x=(k-i+1)z\}.  
\]

\subsection{Adding diagonal inner lines}

Now we add the diagonal inner lines, namely the lines $H_i$ of $\A$, for
$i=1,\ldots,k+j+1$.
In other words, starting from the grid, we add diagonal lines
 lying $k$ integral steps above the origin and proceed downwards, in such a way that the total number of
triple points along each new line is constant.
Indeed, the decreasing number of affine triple points on $H_i$ is compensated by
the increasing multiplicity at infinity.

Now define,    the nested arrangements
$\A_0 \subset \A_1 \subset \cdots \subset \A_{k+j+1}$ as follows:
$$
\A_i=\A_0 \cup H_1 \cup \cdots \cup H_i.
$$

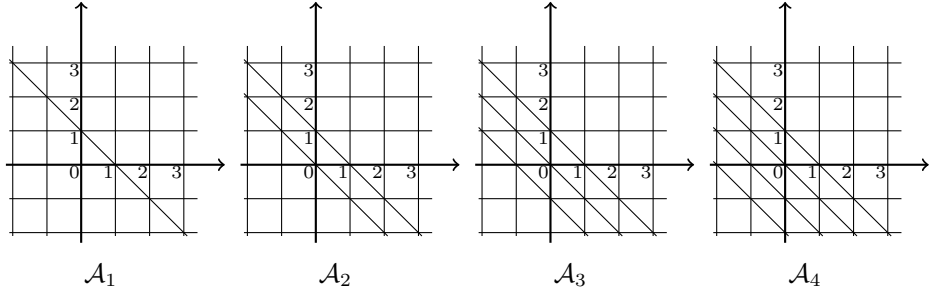
\begin{figure}
\quad
    \begin{subfigure}[b]{0.2\textwidth}
          \begin{tikzpicture}[scale=0.45]
            \draw (1.8,1.8) node {\tiny{$0$}};
            \draw (2.8,1.8) node {\tiny{$1$}};
            \draw (3.8,1.8) node {\tiny{$2$}}; \draw (4.8,1.8) node {\tiny{$3$}};
            \draw (1.8,2.8) node {\tiny{$1$}};
            \draw (1.8,3.8) node {\tiny{$2$}}; \draw (1.8,4.8) node {\tiny{$3$}};
            \draw (1,-0.1) -- (1,5.5);
            \draw[thick,->] (2,-0.3) to (2,6.8);
            \draw (0,-0.1) -- (0,5.5);
            \draw (3,-0.1) -- (3,5.5);
            \draw (4,-0.1) -- (4,5.5); \draw (5,-0.1) -- (5,5.5);
            \draw (-0.1,1) -- (5.4,1);
            \draw (-0.1,0) -- (5.4,0);
            \draw[thick,->] (-0.2,2) -- (6.2,2);
            \draw (-0.1,3) -- (5.4,3);
            \draw (-0.1,4) -- (5.4,4); \draw (-0.1,5) -- (5.4,5);
            \draw (5.1,-0.1) -- (-0.1,5.1);
          \end{tikzpicture}
        \caption*{$\A_1$}   
    \end{subfigure}
\hspace{0.3cm}
    \begin{subfigure}[b]{0.2\textwidth}
        \centering
          \begin{tikzpicture}[scale=0.45]
            \draw (1.8,1.8) node {\tiny{$0$}};
            \draw (2.8,1.8) node {\tiny{$1$}};
            \draw (3.8,1.8) node {\tiny{$2$}}; \draw (4.8,1.8) node {\tiny{$3$}};
            \draw (1.8,2.8) node {\tiny{$1$}};
            \draw (1.8,3.8) node {\tiny{$2$}}; \draw (1.8,4.8) node {\tiny{$3$}};
            \draw (1,-0.1) -- (1,5.5);
            \draw[thick,->] (2,-0.3) to (2,6.8);
            \draw (0,-0.1) -- (0,5.5);
            \draw (3,-0.1) -- (3,5.5);
            \draw (4,-0.1) -- (4,5.5); \draw (5,-0.1) -- (5,5.5);
            \draw (-0.1,1) -- (5.4,1);
            \draw (-0.1,0) -- (5.4,0);
            \draw[thick,->] (-0.2,2) -- (6.2,2);
            \draw (-0.1,3) -- (5.4,3);
            \draw (-0.1,4) -- (5.4,4); \draw (-0.1,5) -- (5.4,5);
            \draw (5.1,-0.1) -- (-0.1,5.1);
            \draw (4.1,-0.1) -- (-0.1,4.1);
          \end{tikzpicture}
        \caption*{$\A_2$}
    \end{subfigure}
\hspace{0.3cm}
    \begin{subfigure}[b]{0.2\textwidth}
        \centering
          \begin{tikzpicture}[scale=0.45]
            \draw (1.8,1.8) node {\tiny{$0$}};
            \draw (2.8,1.8) node {\tiny{$1$}};
            \draw (3.8,1.8) node {\tiny{$2$}}; \draw (4.8,1.8) node {\tiny{$3$}};
            \draw (1.8,2.8) node {\tiny{$1$}};
            \draw (1.8,3.8) node {\tiny{$2$}}; \draw (1.8,4.8) node {\tiny{$3$}};
            \draw (1,-0.1) -- (1,5.5);
            \draw[thick,->] (2,-0.3) to (2,6.8);
            \draw (0,-0.1) -- (0,5.5);
            \draw (3,-0.1) -- (3,5.5);
            \draw (4,-0.1) -- (4,5.5); \draw (5,-0.1) -- (5,5.5);
            \draw (-0.1,1) -- (5.4,1);
            \draw (-0.1,0) -- (5.4,0);
            \draw[thick,->] (-0.2,2) -- (6.2,2);
            \draw (-0.1,3) -- (5.4,3);
            \draw (-0.1,4) -- (5.4,4); \draw (-0.1,5) -- (5.4,5);
            \draw (5.1,-0.1) -- (-0.1,5.1);
            \draw (4.1,-0.1) -- (-0.1,4.1);
            \draw (3.1,-0.1) -- (-0.1,3.1);
          \end{tikzpicture}
        \caption*{$\A_3$}
    \end{subfigure}
    \hspace{0.3cm}
    \begin{subfigure}[b]{0.2\textwidth}
        \centering
          \begin{tikzpicture}[scale=0.45]
            \draw (1.8,1.8) node {\tiny{$0$}};
            \draw (2.8,1.8) node {\tiny{$1$}};
            \draw (3.8,1.8) node {\tiny{$2$}}; \draw (4.8,1.8) node {\tiny{$3$}};
            \draw (1.8,2.8) node {\tiny{$1$}};
            \draw (1.8,3.8) node {\tiny{$2$}}; \draw (1.8,4.8) node {\tiny{$3$}};
            \draw (1,-0.1) -- (1,5.5);
            \draw[thick,->] (2,-0.3) to (2,6.8);
            \draw (0,-0.1) -- (0,5.5);
            \draw (3,-0.1) -- (3,5.5);
            \draw (4,-0.1) -- (4,5.5); \draw (5,-0.1) -- (5,5.5);
            \draw (-0.1,1) -- (5.4,1);
            \draw (-0.1,0) -- (5.4,0);
            \draw[thick,->] (-0.2,2) -- (6.2,2);
            \draw (-0.1,3) -- (5.4,3);
            \draw (-0.1,4) -- (5.4,4); \draw (-0.1,5) -- (5.4,5);
            \draw (5.1,-0.1) -- (-0.1,5.1);
            \draw (4.1,-0.1) -- (-0.1,4.1);
            \draw (3.1,-0.1) -- (-0.1,3.1);
            \draw (2.1,-0.1) -- (-0.1,2.1);
          \end{tikzpicture}
        \caption*{$\A_4$}
    \end{subfigure}
    \caption*{Nested arrangements $\A_i$ for $j=2$, $k=1$.}
\end{figure}

For $i=1,\ldots,k+j+1$, we compute 
\begin{align}
\label{c1} 
\nonumber & |\A_i|-1-|(\A_i \cup \{H_{i+1}\})^{H_{i+1}}|=k+2j+1.
\end{align}
In other words, $t_{\A_i,H_{i+1}}=k+2j+1$ by Lemma \ref{lemma}.
Hence by Proposition \ref{FV}  there is an exact sequence:
\begin{equation}
  \label{Ti-Fi}
0 \longrightarrow T_{\A_{i+1}} \longrightarrow 
T_{\A_i} \longrightarrow \mathcal{O}_{H_{i+1}}(-k-2j-1) \longrightarrow 0.
\end{equation}
 For $i=1,\ldots,k+j+1$, let:
 \[F_i:=T_{\A_i}(k+2j+i).\]
 Then $c_1(F_{i})=i-2$ and it is easy to check that:
$$
F_1  \simeq \mathcal{O}_{\P^2} \oplus \mathcal{O}_{\P^2}(-1), \qquad
F_2  \simeq  \mathcal{O}_{\P^2}^{\oplus 2} \qquad \mbox{and} \qquad
F_3  \simeq  T_{\P^2}(-1).
$$

The sequence of $F_i$ constructed in this way corresponds to the  ``extension step'' explained in \S\ref{steiner-reduction-extension}. Indeed, using 
that $E^\vee \simeq E(-c_1(E))$ for a rank-$2$ bundle $E$, the dual exact sequence of (\ref{Ti-Fi}) gives  
 the following extension for $2\le i\le k+j$: 
\[
0 \longrightarrow F_{i} \longrightarrow F_{i+1} \longrightarrow \mathcal{O}_{H_{i+1}} \longrightarrow 0.  
\]
Then, for $2\le i\le k+j$, by \S \ref{extension} the line $H_{i+1}$ is unstable for $F_{i+1}$, and
$F_{i+1}$ is a Steiner bundle with resolution: 
$$
0 \longrightarrow \mathcal{O}_{\P^2}(-1)^{i-1} \longrightarrow
\mathcal{O}_{\P^2}^{i+1} \longrightarrow 
F_{i+1} \longrightarrow 0.
$$

\subsection{Diagonal outer lines}

Now we add the remaining diagonal lines of $\A$, which we call ``outer''. We start from $H_0$ 
(i.e., the line lying right above $H_1$) and go upwards, i.e. we add
$H_{-1},\ldots,H_{1-j}$.
In other words, for $i = 1,\ldots,j$,  we define the nested arrangements:
$$
\B_i=\A_{k+j+1} \cup H_{0} \cup \cdots \cup H_{1-i}.
$$
Therefore $\A = \B_{j}$. We fix the following notation:
\[
E_0= F_{k+j+1}= T_{\A_{k+j+1}}(2k+3j+1) \qquad \mbox{and} \qquad  E_i=T_{\B_i}(2k+3j+1).
\]
\begin{figure}[h!]
    \begin{subfigure}[b]{0.2\textwidth}
        \centering
          \begin{tikzpicture}[scale=0.5]
            \draw (1.8,1.8) node {\tiny{$0$}};
            \draw (2.8,1.8) node {\tiny{$1$}};
            \draw (3.8,1.8) node {\tiny{$2$}}; \draw (4.8,1.8) node {\tiny{$3$}};
            \draw (1.8,2.8) node {\tiny{$1$}};
            \draw (1.8,3.8) node {\tiny{$2$}}; \draw (1.8,4.8) node {\tiny{$3$}};
            \draw (1,-0.1) -- (1,5.5);
            \draw[thick,->] (2,-0.3) to (2,6.8);
            \draw (0,-0.1) -- (0,5.5);
            \draw (3,-0.1) -- (3,5.5);
            \draw (4,-0.1) -- (4,5.5); \draw (5,-0.1) -- (5,5.5);
            \draw (-0.1,1) -- (5.4,1);
            \draw (-0.1,0) -- (5.4,0);
            \draw[thick,->] (-0.2,2) -- (6.2,2);
            \draw (0.9,5.1) -- (5.1,0.9);
            \draw (-0.1,3) -- (5.4,3);
            \draw (-0.1,4) -- (5.4,4); \draw (-0.1,5) -- (5.4,5);
            \draw (5.1,-0.1) -- (-0.1,5.1);
            \draw (4.1,-0.1) -- (-0.1,4.1);
            \draw (3.1,-0.1) -- (-0.1,3.1);
            \draw (2.1,-0.1) -- (-0.1,2.1);
          \end{tikzpicture}
        \caption*{$\B_1$}
    \end{subfigure}
\hspace{2cm}
    \begin{subfigure}[b]{0.4\textwidth}
        \centering
          \begin{tikzpicture}[scale=0.5]
            \draw (1.8,1.8) node {\tiny{$0$}};
            \draw (2.8,1.8) node {\tiny{$1$}};
            \draw (3.8,1.8) node {\tiny{$2$}}; \draw (4.8,1.8) node {\tiny{$3$}};
            \draw (1.8,2.8) node {\tiny{$1$}};
            \draw (1.8,3.8) node {\tiny{$2$}}; \draw (1.8,4.8) node {\tiny{$3$}};
            \draw (1,-0.1) -- (1,5.5);
            \draw[thick,->] (2,-0.3) to (2,6.8);
            \draw (0,-0.1) -- (0,5.5);
            \draw (3,-0.1) -- (3,5.5);
            \draw (4,-0.1) -- (4,5.5); \draw (5,-0.1) -- (5,5.5);
            \draw (-0.1,1) -- (5.4,1);
            \draw (-0.1,0) -- (5.4,0);
            \draw[thick,->] (-0.2,2) -- (6.2,2);
            \draw (0.9,5.1) -- (5.1,0.9);
            \draw (1.9,5.1) -- (5.1,1.9);
            \draw (-0.1,3) -- (5.4,3);
            \draw (-0.1,4) -- (5.4,4); \draw (-0.1,5) -- (5.4,5);
            \draw (5.1,-0.1) -- (-0.1,5.1);
            \draw (4.1,-0.1) -- (-0.1,4.1);
            \draw (3.1,-0.1) -- (-0.1,3.1);
            \draw (2.1,-0.1) -- (-0.1,2.1);
          \end{tikzpicture}
        \caption*{$\B_2 = \A$}
    \end{subfigure}
\caption*{Arrangements $\B_i$ for $j=2$, $k=1$.}
\end{figure}
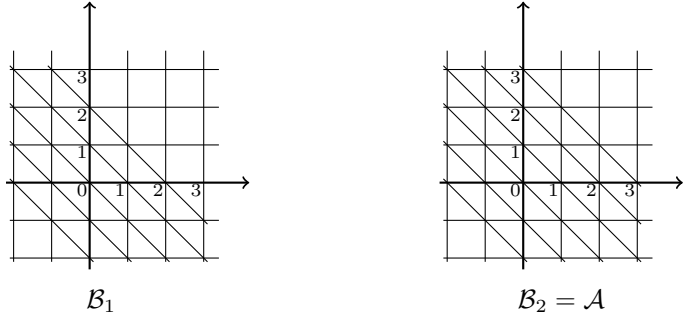

For $i=1,\ldots,j$, it is easy to compute
$
t_{\B_i,H_{1-i}}=2k+3j+1$. By Theorem \ref{main0}, we have thus the exact sequences:
$$
0 \longrightarrow 
E_i \longrightarrow E_{i-1} \longrightarrow \mathcal{O}_{H_{1-i}} \longrightarrow 
0.
$$
The line $H_{1-i}$ is then unstable for $E_{i-1}$. So  the ``reduction step'' recalled in \S\ref{reduction},
implies that 
$E_i $ is a Steiner bundle, for $i=1,\ldots,j$,  with resolution
of the form:
$$
0 \longrightarrow 
\mathcal{O}_{\P^2}^{k+j-1-i}(-1) \longrightarrow
\mathcal{O}_{\P^2}^{k+j+1-i} \longrightarrow 
E_i \longrightarrow
0.
$$
The case $i=j$ completes the proof of Theorem \ref{main1}. 
\end{proof}

\begin{rem}
\label{def-extended}
If we extend the definition of $H_{1-i}$ and $\B_i$ to the range
$i=j+1,\ldots,k+j-2$, the equality $t_{\B_i,H_{1-i}}=2k+3j+1$ remains valid.
Applying repeatedly the reduction step \S\ref{reduction}, 
we get $E_{k+j-2} \simeq T_{\P^2}(-1)$.
Also, $W(E_{k+j-3})$ is a smooth
conic $C \subset \PD^2$.
Also, we will see that 
$E_{k+j-4}$ has exactly six jumping lines, and that only five of them
lie on $C$. 
We will use these supplementary bundles in the next section.
\end{rem}

\section{Unstable lines of the logarithmic bundle}

\label{unstable}

Let us fix again the notation as in the introduction:
$$
\A=\A_{A_2}^{[-j,k+j]}.
$$
We have just verified that 
$T_{\A}$ is a Steiner bundle. We can 
now investigate its splitting type in more detail. The following is known, see \cite{W}  for 
example. 

\begin{prop} 
The splitting type of $\A$ at infinity is:
\[
T_{\A}(2k+3j+1)|_{H_\infty} \simeq 
\mbox{$\mathcal{O}_{H_\infty}(\lceil  \frac{k-1}{2} \rceil) \oplus \mathcal{O}_{H_\infty}(\lfloor  \frac{k+1}{2} \rfloor)$.}
\]
\end{prop}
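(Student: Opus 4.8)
The plan is to read off the splitting type of $T_\A$ along $H_\infty$ from the \emph{Ziegler restriction} of $\A$ at the line at infinity. Recall that for a line $H$ of a line arrangement in $\P^2$ this restriction is the multiarrangement on $H\cong\P^1$ supported on $\A^H$, with multiplicity $m^H(p)=\#\{K\in\A\mid K\neq H,\ p\in K\}$ at each point $p\in\A^H$; it is free of rank $2$, with a pair of exponents $e_1\le e_2$ satisfying $e_1+e_2=|m^H|=n-1$, and one has $T_\A|_H\simeq\mathcal{O}_H(-e_2)\oplus\mathcal{O}_H(-e_1)$. I take this last identification as known: it is Ziegler's theorem when $\A$ is free, and is the content of the result cited just above. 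Note that the direct route via Theorem \ref{main0} is unavailable, since $n-|\A^{H_\infty}|=n-3$ exceeds $\lceil\frac{n-1}{2}\rceil$. Thus the proof reduces to identifying the Ziegler restriction of $\A$ at $H_\infty$ and computing its exponents.

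The combinatorial part is immediate. In the coordinates $(z:x:y)$ the lines of $\A$ other than $H_\infty=\{z=0\}$ form three pencils: all the lines $X_i=\{x=iz\}$ pass through $p_1=(0:0:1)$, all the lines $Y_i=\{y=iz\}$ pass through $p_2=(0:1:0)$, and all the diagonal lines $\{x+y=sz\}$ pass through $p_3=(0:1:-1)$. The points $p_1,p_2,p_3$ are distinct and exhaust $\A^{H_\infty}$, and since no line of $\A$ lies in two of the pencils, each $p_i$ has multiplicity exactly $m:=k+2j+1$. Hence the Ziegler restriction of $\A$ at $H_\infty$ is the multiarrangement on $\P^1$ given by three points, each of multiplicity $m$.

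Next I invoke the known formula for the exponents of a rank-$2$ multiarrangement supported on three points of multiplicities $a\ge b\ge c$: they equal $(b+c,\,a)$ when $a\ge b+c$, and $\bigl(\lfloor\tfrac{a+b+c}{2}\rfloor,\ \lceil\tfrac{a+b+c}{2}\rceil\bigr)$ otherwise (Wakamiko; Wakefield--Yuzvinsky). With $a=b=c=m$ we are in the second, balanced case, so the exponents are $\bigl(\lfloor\tfrac{3m}{2}\rfloor,\ \lceil\tfrac{3m}{2}\rceil\bigr)$, giving $T_\A|_{H_\infty}\simeq\mathcal{O}_{H_\infty}(-\lceil\tfrac{3m}{2}\rceil)\oplus\mathcal{O}_{H_\infty}(-\lfloor\tfrac{3m}{2}\rfloor)$. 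Twisting by $\mathcal{O}_{H_\infty}(2k+3j+1)$, putting $m=k+2j+1$, and using $2(2k+3j+1)=3m+(k-1)$, one recovers the splitting type in the statement; this last step is mere bookkeeping with floors and ceilings, consistent with $c_1(T_\A(2k+3j+1))=k-1$.

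The single ingredient that is not purely formal is the identification used at the start --- that in the non-free arrangement $\A$ the splitting type of $T_\A|_{H_\infty}$ is still governed by the Ziegler exponents. A self-contained alternative is to restrict the exact sequences of \S\ref{resolution} to $H_\infty$: every diagonal line meets $H_\infty$ transversally at the single point $p_3$, so each such sequence restricts to an elementary transformation at $p_3$ of a rank-$2$ bundle on $\P^1$, starting from $T_{\A_0}|_{H_\infty}=\mathcal{O}_{H_\infty}(-m)^2$. The substantive point is then to check that at every stage the surjection onto the skyscraper $\mathbb{C}_{p_3}$ is nonzero on the summand of larger degree, so that each transformation keeps the splitting as balanced as the number of steps allows; this is exactly what forces the two total drops in degree to be $\lceil\tfrac{m}{2}\rceil$ and $\lfloor\tfrac{m}{2}\rfloor$. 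I expect that balancedness assertion to be the main obstacle in a proof that avoids quoting the Ziegler-restriction machinery; with that machinery in hand, only the short computation above remains.
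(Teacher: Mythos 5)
Your argument is correct and is essentially the paper's own: the paper offers no proof beyond the citation ``see [W]'', and what that citation stands for is exactly your computation --- the Ziegler restriction of $\A$ at $H_\infty$ is the multiarrangement of three points of equal multiplicity $m=k+2j+1$, whose exponents are balanced by Wakamiko's formula, combined with the (known for rank-$3$ arrangements, and equally taken for granted by the paper) identification of the splitting type of $T_\A|_{H_\infty}$ with those exponents. One caveat: your bookkeeping actually yields $\mathcal{O}_{H_\infty}(\lceil\tfrac{k-1}{2}\rceil)\oplus\mathcal{O}_{H_\infty}(\lfloor\tfrac{k-1}{2}\rfloor)$, whose degrees sum to $c_1(T_\A(2k+3j+1))=k-1$, whereas the degrees displayed in the Proposition sum to $k$; so you do not literally ``recover the splitting type in the statement'' --- rather, your computation shows that the $\lfloor\tfrac{k+1}{2}\rfloor$ in the statement is a typo for $\lfloor\tfrac{k-1}{2}\rfloor$, and you should say so instead of asserting agreement.
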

In other words, the splitting type of $T_{\A}$ onto $H_\infty$ is 
generic. Now let us look for jumping lines of
$T_{\A}$. Recall that $c_1(T_{\A}(2k+1+3j))=k-1$. Then 
a line $L \subset \P^2$ is a jumping line of $T_\A$
if and only if:
\[
H^0(\P^2,T_\A(2k+3j+1-s)|_L) \neq 0, \qquad \mbox{for $2s\ge k+1$},
\]
or equivalently, by Serre duality, if and only if:
$$
H^1(\P^2,T_\A(k+3j+s)|_L) \ne 0 ,\qquad \mbox{for $2s\ge k+1$}.
$$
The jumping lines are unstable lines when $s=k-1$.

\begin{prop} 
\label{jump}
Let $0\le 2s \le k-1$ and let $H$ be one of the following lines:
\begin{align*}
 &\mbox{lines of $\A$:} && X_{k+j-s}, && Y_{k+j-s}, &&H_{k+j+1-s},\\
 &\mbox{lines outside $\A$:} &&X_{-(j+s+1)}, && Y_{-(j+s+1)}, &&H_{-(j+s)}.
\end{align*}
Then $T_\A(2k+3j+1)|_H \simeq \mathcal{O}_H(k-1-s) \oplus 
\mathcal{O}_H(s)$.
\end{prop}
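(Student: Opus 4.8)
The plan is to prove the statement by induction on $s$, using the nested family of arrangements built in \S\ref{resolution} together with Theorem \ref{main0}. For $s=0$ the six lines in question are exactly $X_{k+j}$, $Y_{k+j}$, $H_{k+j+1}$ (lines of $\A$) and $X_{-(j+1)}$, $Y_{-(j+1)}$, $H_{-(j)}$ (lines outside $\A$); these are precisely the lines whose removal (or addition) was shown to produce an exact sequence with cokernel $\mathcal O_L$, i.e. the unstable lines detected at each step of the construction of $\A$. Concretely, for a line $H$ of $\A$, Theorem \ref{main0}\eqref{i} computes the splitting type provided $n-h$ is small enough, where $h=|\A^H|$; for a line $L$ outside $\A$, part \eqref{ii} does the same provided $n-\ell$ is large enough. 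So the first task is purely combinatorial: for each of the six families of lines, count $h$ (resp. $\ell$), the number of points cut out on the line by the arrangement, as a function of $s$.

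The heart of the matter is this count. Take $H = X_{k+j-s}$, the vertical line $x=(k+j-s)z$. Its intersections with the grid lines $Y_{-j},\dots,Y_{k+j}$, with the line at infinity, and with the diagonals $H_i$, must be enumerated, keeping track of coincidences (triple points). One finds that moving the line inward by $s$ steps kills $s$ triple points, so that $h = |\A^H|$ drops by $s$ relative to the $s=0$ case, hence $n-h$ increases by $s$. Plugging $n = |\A| = 1 + 3(k+2j+1)$ and the value of $h$ into Theorem \ref{main0}\eqref{i} gives $(d_1,d_2)=(n-h,h-1)$ as long as $n-h\le\lceil\frac{n-1}2\rceil$, which is exactly guaranteed by the hypothesis $2s\le k-1$; after twisting by $2k+3j+1$ this reads $T_\A(2k+3j+1)|_H\simeq\mathcal O_H(s)\oplus\mathcal O_H(k-1-s)$. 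The three diagonal lines $H_{k+j+1-s}$ are handled identically after noting that $\{y+x = c z\}$ meets the arrangement in the same combinatorial pattern by the symmetry of $A_2$ (permuting $x$, $y$, $-x-y$). For the three lines outside $\A$ one runs the same count for $\ell=|\{L\cap K\mid K\in\A\}|$, observing that these "virtual" lines pass through $s$ of the triple points of $\A$, so $n-\ell$ is large, and Theorem \ref{main0}\eqref{ii} applies under the same bound $2s\le k-1$, yielding $(e_1,e_2)=(n-\ell,\ell-1)$ and the claimed splitting after the twist.

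The main obstacle is the bookkeeping of triple (and higher) points along each of the six lines: one must verify that, on each such line, all multiple points are genuinely triple (never quadruple or worse) and that exactly $s$ of them disappear as the line sweeps $s$ integral steps toward the "edge" of the arrangement. This is where the precise geometry of the deformed Weyl arrangement of type $A_2$ enters — the lattice of intersection points of $\{x=az\}$, $\{y=bz\}$, $\{x+y=cz\}$ — and it is best organized by fixing affine coordinates, listing the grid points $(a,b)$ with $-j\le a,b\le k+j$ lying on the chosen line, and checking for each which diagonals $H_i$ pass through it. I expect no conceptual difficulty beyond this, since once $h$ (resp. $\ell$) is known, Theorem \ref{main0} delivers the splitting type mechanically, and the numerical inequality needed to invoke it coincides on the nose with the standing hypothesis $0\le 2s\le k-1$.
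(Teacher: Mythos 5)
Your proposal takes essentially the same route as the paper's proof: count the multiple points on each of the six lines (the paper records this as $t_{\A,H}=k+3j+1+s$ for the lines of $\A$ and $t_{\A\cup\{H\},H}=2k+3j+1-s$ for the lines outside $\A$) and feed the result into Theorem \ref{main0}, parts \eqref{i} and \eqref{ii} respectively, with the hypothesis $2s\le k-1$ supplying the required inequality. One small slip in your narrative: moving $X_{k+j}$ inward by $s$ steps \emph{creates} $s$ additional triple points rather than killing them --- which is precisely why $h$ drops by $s$ --- but the resulting value of $n-h$ and the rest of the argument match the paper's.
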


\begin{proof}
Let us prove the statement for the lines of $\A$.
It is easy to check that $t_{\A,H}=k+3j+1+s$. 
Hence applying Theorem \ref{main0}, part \eqref{i}, we know that:
\begin{equation}
  \label{splittingtype}
T_\A|_H \simeq 
\mathcal{O}_H(-2k-3j-1+s) \oplus \mathcal{O}_H(-k-3j-2-s).  
\end{equation}

Let us check the lines outside $\A$.
It is easy to see that $t_{\A \cup \{H\},H}=2k+3j+1-s$. 
Hence applying Theorem \ref{main0}, part \eqref{ii}, we get again \eqref{splittingtype}.
\end{proof}

\begin{cor} \label{unstable1}
Let $\kappa \ge 0$ and $s \ge 0$ be integers.
Then the set of jumping lines of order $\kappa+s-j$ of $T_\A$ contains:
\begin{align*}
 &X_{2\kappa +s}, && Y_{2\kappa +s}, && H_{k+s+1},\\  
 &X_{(-2j+s+1)}, && Y_{(-2j+s+1)}, && H_{k+s-2\kappa-2j}.
\end{align*}
In particular, the following lines are unstable 
 for $T_{\A}$:
\begin{align*}
  &X_{k+j}, && Y_{k+j}, && H_{k+j+1},\\ 
  &X_{-(j+1)},&& Y_{-(j+1)},&& H_{-j}.
\end{align*}
\end{cor}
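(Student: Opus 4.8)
The plan is to deduce everything directly from Proposition~\ref{jump}, once the generic splitting type of $T_\A$ is taken into account. Recall that $T_\A(2k+3j+1)$ is a rank-two bundle with $c_1=k-1$ whose restriction to a general line is balanced: the Proposition computing the splitting type at infinity gives generic splitting type $(\lfloor(k-1)/2\rfloor,\lceil(k-1)/2\rceil)$, so $T_\A$ is semistable. Hence a line $L$ with $T_\A(2k+3j+1)|_L\simeq\mathcal{O}_L(a)\oplus\mathcal{O}_L(k-1-a)$ and $a\le\lfloor(k-1)/2\rfloor$ is a jumping line of order $\lfloor(k-1)/2\rfloor-a$, and it is unstable precisely when $a=0$.

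The first step is to feed Proposition~\ref{jump} into this dictionary. For each integer $s$ with $0\le 2s\le k-1$ that proposition produces the six lines $X_{k+j-s},\,Y_{k+j-s},\,H_{k+j+1-s}$ (lying in $\A$) and $X_{-(j+s+1)},\,Y_{-(j+s+1)},\,H_{-(j+s)}$ (not in $\A$), all with splitting type $(s,k-1-s)$; so each of them is a jumping line of $T_\A$ of order $\lfloor(k-1)/2\rfloor-s$. Re-expressing the parameter $s$ through the jumping order --- equivalently, bringing in the auxiliary integer $\kappa$ so that the order reads $\kappa+s-j$ and the line indices acquire the displayed shape --- is then a substitution, which one verifies works uniformly whether $k$ is even or odd.

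The ``in particular'' assertion is the case $s=0$ of Proposition~\ref{jump}: it gives $T_\A(2k+3j+1)|_H\simeq\mathcal{O}_H\oplus\mathcal{O}_H(k-1)$ for $H$ any of $X_{k+j},Y_{k+j},H_{k+j+1},X_{-(j+1)},Y_{-(j+1)},H_{-j}$, and this is exactly the maximally unbalanced splitting of a rank-two bundle with $c_1=k-1$, so the definition of an unstable line applies. (The jump is proper, of order $\lfloor(k-1)/2\rfloor>0$, as soon as $k\ge 3$; for $k=2$ one has $T_\A\simeq T_{\P^2}(-6-3j)$ and every line is already unstable.) I do not foresee a genuine obstacle: the single structural input is the balanced generic splitting type, which turns ``unbalanced restriction'' into ``jumping line of a prescribed order'', and what remains is the reindexing together with a routine even/odd bookkeeping of the six line indices.
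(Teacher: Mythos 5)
Your overall strategy --- reading the corollary off from Proposition \ref{jump} via the balanced generic splitting type --- is exactly what the paper intends: no separate proof is given there, the corollary being presented as an immediate consequence of Proposition \ref{jump}. The ``in particular'' half of your argument is complete and correct: taking $s=0$ in Proposition \ref{jump} gives $T_\A(2k+3j+1)|_H\simeq\mathcal{O}_H\oplus\mathcal{O}_H(k-1)$ for the six listed lines, which is precisely the definition of an unstable line; this is also the only part of the corollary used later, in the proof of Theorem \ref{main3}.

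The gap is in the first half, exactly at the point you declare to be ``a substitution \dots which one verifies works uniformly''. It does not, at least not with the dictionary you set up. Write $\sigma$ for the parameter of Proposition \ref{jump}, so that a line with splitting type $(\sigma,k-1-\sigma)$ is a jumping line of order $\lfloor(k-1)/2\rfloor-\sigma$. Matching $X_{2\kappa+s}=X_{k+j-\sigma}$ forces $\sigma=k+j-2\kappa-s$, hence order $\lfloor(k-1)/2\rfloor-k+2\kappa+s-j$, which equals $\kappa+s-j$ only when $\kappa=k-\lfloor(k-1)/2\rfloor=\lceil(k+1)/2\rceil$. Matching $H_{k+s+1}=H_{k+j+1-\sigma}$ forces $\sigma=j-s$, hence order $\lfloor(k-1)/2\rfloor+s-j$, which equals $\kappa+s-j$ only when $\kappa=\lfloor(k-1)/2\rfloor$. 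These two required values of $\kappa$ differ, so for a fixed pair $(\kappa,s)$ the six listed lines do not all acquire the order $\kappa+s-j$ under your correspondence. Either the corollary's indexing has to be emended (it appears to contain misprints), or you must exhibit and verify a different, explicit correspondence between $(\kappa,s)$ and $\sigma$ separately for each of the six families and check the range condition $0\le 2\sigma\le k-1$ in each case. The even/odd bookkeeping you defer is where the entire content of the first assertion lies, and as written it does not close.
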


\begin{rem}
 These lines do not cover 
the whole set of jumping lines of any given  order, even when the
first Chern class is odd (in which case some of these sets are
presumably finite). 
These lines are determined by the 
arrangement, but it seems very difficult to determine all of them. 
We comment on this shortly in one example, see \S\ref{experiments}.
\end{rem}
\begin{proof}[Proof of Theorem \ref{main3}]
In order to prove Theorem \ref{main3}, we have to show that 
the six lines appearing in Corollary \ref{unstable1} are tangent to
$C_j$ for $k=3$ and that these six lines form 
the set of all unstable lines of $T_{\A}$ for $k\ge 4$. The first fact is clear
from the picture. To prove the second one, we will use the step-by-step construction like in the proof of 
Theorem \ref{main2} whereby keeping the same notation.

Recall that
$H_{i+1}$ is an unstable line for the Steiner bundle $F_{i+1}$.
Now computing the number of triple points  and using Theorem
\ref{main0} we can see that the following $5$ lines are also unstable
for $F_{i+1}$:
\begin{equation} \label{5lines}
  \begin{array}[h]{lll}
  X_{k+j}, & Y_{k+j}, & H_0,\\ 
  X_{-(j+1)},& Y_{-(j+1)}.
  \end{array}
\end{equation}
Let us call $C_{k,j}$ the conic tangent to these last $5$ lines.
The line $H_4$ is tangent to $C_{k,j}$, indeed $W(F_4)$ consist of all lines tangent to
$C_{k,j}$.
Since $H_{5}$ is parallel both to $H_4$ and $H_0$, it is not tangent to $C_{k,j}$.
Then, $W(F_5)$ consists of the $5$ lines of \eqref{5lines}, plus $H_5$. In other words: 
\[
W(F_{5}) \setminus \{H_5\} \subset W(F_{i+1}).
\]

Applying the procedure of \S\ref{extension} repeatedly, we see that:
\[
W(F_{i+1}) \setminus \{H_{i+1}\} \subset W(F_i),
\]
for $i=5, \ldots, k+j$.
Using Proposition \ref{jump}, we see that
the line $H_i$ is not  unstable for $F_{i+1}$.  Then we have more precisely, for $i=5, \ldots, k+j$, 
\[
W(F_{i+1}) \setminus \{H_{i+1}\} \subset W(F_i) \setminus \{H_{i}\}.
\]
The line $H_{k+j+1}$ is of course not unstable for $F_5$. Then putting all these inclusions together, we find:
\[
W(F_{5}) \setminus \{H_5\} \subset W(F_{k+j+1}) \setminus \{H_{k+j+1}\} \subset W(F_5) \setminus \{H_{5}\}.
\]

We have thus proved:
\[
W(F_{k+j+1}) = \{H_{k+j+1}\} \cup W(F_5) \setminus \{H_5\}.
\]

Now we consider the ``outer'' construction. Computing the number of triple points  and using Theorem
\ref{main0} we can see that $H_{-i}$ and the following $5$ lines are  unstable
for $E_{i}=T_{\B_{i}}(2k+3j+1)$:
\begin{equation} \label{5lines2}
  \begin{array}[h]{lll}
  X_{k+j}, & Y_{k+j}, & H_{k+j+1},\\ 
  X_{-(j+1)},& Y_{-(j+1)}.
  \end{array}
\end{equation}
This happens for $i=1, \ldots, j$, but also for $i=j+1, \ldots, k+j-2$, as it was observed in Remark \ref{def-extended}.
Let us call $\Gamma_{k,j}$ the conic tangent to these last $5$ lines.
The line $H_{-k-j+3}$ is tangent to $\Gamma_{k,j}$, as $W(E_{k+j-3})$ consists of all lines tangent to
$\Gamma_{k,j}$.
Since $H_{-k-j+4}$ is parallel both to $H_{-k-j+3}$ and $H_0$, it is not tangent to $\Gamma_{k,j}$.
Therefore, $W(E_{k+j-4})$ consists of the $5$ lines of
\eqref{5lines2}, and of the extra line $H_{-k-j+4}$.

In other words: 
\[
W(E_{k+j-4}) \setminus \{H_{-k-j+4}\} \subset W(E_{j}).
\]

Applying the procedure of \S\ref{extension} repeatedly, we see that:
\[
W(E_{i}) \setminus \{H_{-i}\} \subset W(E_{i+1}),
\]
for $i=1, \ldots, k+j-4$.
Using Proposition \ref{jump}, we see that
the line $H_{-i-1}$ is not  unstable for $E_{i}$.  Then we have more precisely, for $i=1, \ldots, k+j-4$, 
\[
W(E_{i}) \setminus \{H_{-i}\} \subset W(E_{i+1})\setminus \{H_{-i-1}\}
\]
The line $H_{-j}$ is of course not unstable for $E_{k+j-4}$. Then putting all these inclusions together, we find:
\[
W(E_{k+j-4}) \setminus \{H_{-k-j+4}\} \subset W(E_{j})\setminus \{H_{-j}\} \subset W(E_{k+j-4}) \setminus \{H_{-k-j+4}\}.
\]

This proves:
\[
W(E_{j}) = \{H_{-j}\} \cup W(E_{k+j-4}) \setminus \{H_{-k-j+4}\}.
\]
The proof of the theorem is now complete.\end{proof}

\begin{proof}[Proof of Corollary \ref{non}]
When $k=0,1,2$, the isomorphism (\ref{eq2}) is shown in \cite{Y0} and \cite{A}. 
Assume that $k \ge 3$. Then Theorem \ref{main3} shows that the unstable lines 
vary when $j$ varies, which completes the proof.
\end{proof}

\section{Experimental results}
\label{experiments}

We outline here some experimental results concerning logarithmic
bundles associated with root systems of type $A_m$.
First, we give a first estimate on the behaviour of the set of jumping
lines for $m=2$. Then, we provide a counter-example to the dual shift
isomorphism in case $m=3$.

\subsection{Jumping lines}

An interesting question is to determine the set of all jumping lines
of $T_\A$ when $\A=\A_{A_2}^{[-j,k+j]}$ and not only the unstable ones.
The first open case is $j=0$ and $k=6$, so that the arrangement $\A$ consists of
$22$ lines, and $T_\A$ has the resolution:
\[
0 \to \mathcal{O}_{\P^2}(-1)^5 \to  \mathcal{O}^7_{\P^2} \to T_\A(13) \to 0.
\]
The set $S(T_\A)$ of jumping lines is the support in $\PD^2$ of the cokernel of a map:
\[
(\mathrm{Sym}^2(\Omega_{\PD^2}(1)))^5 \to \Omega_{\PD^2}(1)^7.
\]
Combining with the Euler sequence and taking syzygies, $S(T_\A)$ is
seen to be the locus cut by the $8 \times 8$ minors of a matrix of the
form:
\[
\mathcal{O}_{\PD^2}^9(-1) \to \mathcal{O}^8_{\PD^2}.
\]
We may determine this matrix (defined over $\mathbb{Q}$) explicitly with Macaulay2, and find that
that $S(T_\A)$ has length $36$. It contains $6$ triple points, one
for each unstable line, and $6$ smooth points corresponding to the $6$
jumping lines of Corollary \ref{unstable1}.
Moreover it contains $12$ (smooth) points with irrational coordinates
which are a complete intersection of a quartic and a cubic.
These curves both have rational (or integral) coefficients,  and can also
be determined explicitly. For instance the cubic is:
\[
\small{\mbox{$62x^3-90x^2y-90x y^2+62y^3+x^2z-109x y z+y^2z-18x z^2-18y z^2-3z^3$.}}
\]
\subsection{Failure of dual shift isomorphism}

In $\P^m$ with $m \ge 3$ the notion of unstable line for a Steiner
sheaf $E$ lying in:
\[
0 \to \mathcal{O}_{\P^m}(-1)^\ell \to \mathcal{O}_{\P^m}^{\ell+m}\to E \to 0 
\]
is replaced by the idea  of \textit{unstable hyperplane}, namely $H$ is so if:
\[
H^{m-1}(H,E|_H(-m)) \ne 0.
\]

Let $\A=\A_{A_3}^{[0,2]}$. With Macaulay2 we get
the resolution:
\[
0 \to \mathcal{O}_{\P^3}(-8)^{3} \to \mathcal{O}_{\P^3}(-7)^{6}\to T_\A \to 0,
\]
and check that $T'=T_\A^\vee(-12)$ has the same Betti numbers of $T_\A$, i.e. $6$
generators of degree $7$ and $3$ linear syzygies. This confirms
Conjecture \ref{all}.

The sets of unstable planes of $E=T_\A(7)$ and
$E'=T'(7)$ are computed as loci cut by maximal minors, respectively, of
matrices $N$ and $N'$:
\[
N, N' : \mathcal{O}_{\PD^3}(-1)^{18} \to \mathcal{O}_{\PD^3}^{6}.
\]
Macaulay2 here says that, although both $N$ and $N'$ have a cokernel of length
$7$ consisting of $7$ smooth points (so that $E$ and $E'$ both have
$7$ unstable distinct planes), the underlying sets of planes are different, actually only $4$
planes are common to these sets. So $E$ and $E'$ cannot be isomorphic.
This shows that the dual shift isomorphism \eqref{dualiso} does not take place in general.

Note that, for instance for $\Phi=A_4$, one can choose $k$ and $j$ in
such a way that $T_\A$ and $T_\A^\vee(-\eta(k+2j+1))$ do not even have
the same number of unstable hyperplanes.

\end{document}